\numberwithin{equation}{section}
\newcommand{\real}{{\mathbb R}}
\newcommand{\A}{{\mathcal A}}
\newcommand{\E}{{\mathcal E}}
\newcommand{\M}{{\mathcal M}}
\newcommand{\N}{{\mathcal N}}
\newcommand{\D}{{\mathcal D}}
\newcommand{\8}{\infty}
\newcommand{\be}{\begin{eqnarray*}}
\newcommand{\ee}{\end{eqnarray*}}
\newcommand{\beq}{\begin{equation}}
\newcommand{\eeq}{\end{equation}}
\newcommand{\beqn}{\begin{equation*}}
\newcommand{\eeqn}{\end{equation*}}
\newcommand{\bsp}{\begin{split}}
\newcommand{\esp}{\end{split}}
\begin{document}

\title{{ A Beurling-Blecher-Labuschagne type  theorem
for Haagerup noncommutative\\ $L^p$ spaces}
\thanks{ T.N. Bekjan is partially supported by NSFC grant\\ No.11771372, M. Raikhan is partially supported by  project AP05131557 of the Science Committee of Ministry of Education and Science of the Republic of Kazakhstan.}}

\author{Turdebek N. Bekjan        \and
        Madi Raikhan}

\authorrunning{T. N. Bekjan, Madi Raikhan} 

\institute{T. N. Bekjan \at
             College of Mathematics and Systems Science, Xinjiang
University, Urumqi 830046, China. \\
             \email{bekjant@yahoo.com}
           \and
           M. Raikhan\at
             Astana IT University,   Nur-Sultan 010000, Kazakhstan.\\
  \email{madi.raikhan@astanait.edu.kz} }

\date{Received: date / Accepted: date}

\maketitle

\begin{abstract}
Let $\M$ be a $\sigma$-finite von Neumann algebra, equipped with a
 normal  faithful state $\varphi$, and  let $\mathcal{A}$ be maximal subdiagonal subalgebra of $\mathcal{M}$ and $1\le p<\8$. We prove a Beurling-Blecher-Labuschagne type theorem
for $\A$-invariant subspaces of Haagerup noncommutative $L^p(\A)$ and
 give a characterization of outer operators  in  Haagerup
noncommutative $H^{p}$-spaces associated with $\A$.
\keywords{subdiagonal  algebras,  Beurling's theorem, invariant subspace, outer operator, Haagerup noncommutative $H^{p}$-space}
 \subclass{  46L52 \and  47L05}
\end{abstract}

\section{Introduction}

Arveson introduced his
notion of subdiagonal subalgebras of von Neumann algebras (see \cite{A}), in effect, subdiagonal algebras are the
noncommutative analogue of weak* Dirichlet algebras (for the definition of weak* Dirichlet algebras see \cite{SW}). For the finite and semi-finite case,  most results on
the classical Hardy spaces on the torus have been established in
this noncommutative setting. We refer to \cite{A,BX,B1,B2,BL1,BL2,BL3,BL4,PX,Sa} (see
also \cite{BL4} for more  historical references).  It is natural to  consider the case
of $\sigma$-finite von Neumann algebras. But,  the transition from finite or semifinite  to $\sigma$-finite von Neumann algebras is not trival, need some new techniques and some changes.
For some results for this case, see \cite{BM,JOS,J1,J2,L2,X}.

Let $\M$ be a finite von Neumann and $\A$ be its Arveson's maximal subdiagonal subalgebras. In \cite{BL3}, Blecher and  Labuschagne extended the classical Beurling's theorem  to describe closed $\A$-invariant subspaces in noncommutative space $L^p(\M)$ with $1\le p\le\8$. Sager \cite{Sa} extended the work of Blecher and Labuschagne from a finite von
Neumann algebra to semifinite von Neumann algebras, proved a Beurling-Blecher-Labuschagne theorem for $\A$-invariant spaces of
$L^p(\M)$ when $0 < p \le\8$. The Beurling theorem  has been  generalized  to
the setting of unitarily invariant norms on finite and semifinite von Neumann algebras (see \cite{B2}, \cite{CHS}, \cite{SL}).

When $\A$ is subdiagonal subalgebra of  $\sigma$-finite von Neumann $\M$, Labu-\\ schagne \cite{L2}  showed that a Beurling type theory of invariant subspaces of
noncommutative $H^2$-spaces holds true.  A motivation for this paper is to extend the result in \cite{L2} to the setting of the Haagerup noncommutative
$L^p$-spaces for $1\le p<\8$.

Blecher and  Labuschagne \cite{BL1} studied outer operators of the noncommutative $H^p$-spaces
associated with Arveson's subdiagonal subalgebras. They proved  inner-outer
factorization theorem and characterizations of outer operators for the case $1\le p<\infty$ (for the case $p < 1$,
see \cite{BX}).  In \cite{BL2}, they  extended their generalized inner-outer
factorization theorem in \cite{BL1} and  established characterizations of outer operators that are valid even in the case of
operators with zero determinant. In this paper, we apply Labuschagne's Beurling type theorem
for $\A$-invariant subspaces of Haagerup noncommutative $L^2$-spaces to prove a Blecher-Labuschagne theorem for outer operators in Haagerup
noncommutative  $H^p$-spaces ($1\le p<\8$).

The organization of the paper is as follows. In Section 2, we
give some definitions and  related results of  Haagerup
noncommutative $L^{p}$-spaces and $H^{p}$-spaces.  A Blecher-Labuschagne-Beurling type theorem
for  Haagerup
noncommutative $L^{p}$-spaces is presented in Section 3. In Section 4, we give characterizations of outer operators in Haagerup
noncommutative  $H^{p}$-spaces.

\section{Preliminaries}

Our references  for modular theory  are \cite{PT,T3}, for the Haagerup noncommutative
$L^p$-spaces are \cite{H1,Te} and for the Haagerup noncommutative
$H^p$-spaces are \cite{J1,J2}. Let us recall some basic facts about the Haagerup noncommutative
$L^p$-spaces and the Haagerup noncommutative
$H^p$-spaces, and fix the relevant
notation used throughout this paper.
Throughout this paper $\mathcal{M}$ will always denote  a $\sigma$-finite von Neumann algebra on a complex
Hilbert space $\mathcal{H}$, equipped with a distinguished
 normal  faithful state $\varphi$.
Let $\{\sigma_{t}^\varphi\}_{t\in\real}$
 be the one parameter modular automorphism
group of $\mathcal{M}$ associated with $\varphi$. We denote by
$$
\mathcal{N}=\mathcal{M}\rtimes_{\sigma^\varphi}\real
$$
the crossed product of $\mathcal{M}$ by
$\{\sigma_{t}^\varphi\}_{t\in\mathbb{R}}$. It is well known that
$\mathcal{N}$ is the semi-finite von Newmann algebra acting on the Hilbert space
$L^{2}(\mathbb{R},\mathcal{H}),$ generated by
$$
\left\{ \pi(x):\;x\in\mathcal{M}\right\}\cup\left\{\lambda(s):\;s\in \mathbb{R}\right\},
$$
where  the operator $\pi(x)$ is defined by
$$
(\pi(x)\xi)(t)=\sigma_{-t}^\varphi(x)\xi(t),\qquad \forall\xi\in
L^{2}(\mathbb{R},\mathcal{H}),\quad\forall t\in\mathbb{R},
$$
and the operator $\lambda(s)$ is defined by

$$
(\lambda(s)\xi)(t)=\xi(t-s),\qquad \forall\xi\in
L^{2}(\mathbb{R},\mathcal{H}),\quad\forall t\in\mathbb{R}.
$$
We will identify $\mathcal{M}$ and the subalgebra $\pi(\mathcal{M})$ of $\mathcal{N}$.
The operators $\pi(x)$ and $\lambda(t)$ satisfy
$$
\lambda(t)\pi(x)\lambda(t)^*=\pi(\sigma_{t}^\varphi(x)), \qquad\forall t\in \real, \quad\forall x\in\M.
$$
Then
$$
\sigma_{t}^\varphi(x)=\lambda(t)x\lambda^{\ast}(t), \qquad x\in\mathcal{M},\quad t\in\mathbb{R}.
$$

We denote by $\{\hat{\sigma}_{t}\}_{t\in\mathbb{R}}$ the dual action of $\mathbb{R}$
on $\mathcal{N}$, this is a one parameter automorphism group of $\mathbb{R}$ on
$\mathcal{N},$ implemented by the unitary representation
$\{W_{t}\}_{t\in\mathbb{R}}$ of $\mathbb{R}$ on $L^{2}(\mathbb{R},\mathcal{H}):$
\begin{equation}\label{dual action}
\hat{\sigma}_{t}(x)=W(t)xW^{\ast}(t),\qquad \forall x\in\mathcal{N},\quad\forall t\in\mathbb{R},
\end{equation}
where
$$
W(t)(\xi)(s)=e^{-its}\xi(s),\qquad \forall\xi\in L^{2}(\mathbb{R},\mathcal{H}),\quad\forall
s,t\in\mathbb{R}.
$$
Note that the dual action $\hat{\sigma}_{t}$ is uniquely determined by the
following conditions: for any $x\in\M$ and $s\in\mathbb{R}$,
$$
\hat{\sigma}_{t}(x)=x \quad\mbox{and}\quad
\hat{\sigma}_{t}(\lambda(s))=e^{-ist}\lambda(s),\quad \forall t\in\mathbb{R}.
$$
Hence
$$
\mathcal{M}=\{x\in\mathcal{N}: \; \hat{\sigma}_{t}(x)=x,\;\forall
t\in\mathbb{R}\}.
$$
Let $\tau$ be  the unique  normal semi-finite faithful trace  on $\mathcal{N}$ satisfying
$$
\tau\circ\hat{\sigma}_{t}=e^{-t}\tau,\qquad \forall t\in\mathbb{R}.
$$

Also recall that the dual weight
$\hat{\varphi}$ of our distinguished state $\varphi$ has the
Radon -Nikodym derivative $D$ with respect to $\tau$, which is the unique invertible positive selfadjoint operator on
$L^{2}(\mathbb{R},\mathcal{H}),$ affiliated with $\mathcal{N}$ such that
$$
\hat{\varphi}(x)=\tau(Dx),\qquad x\in \mathcal{N}_{+}.
$$
Recall  that the
regular representation of the above $\lambda(t)$ is given by
$$
\lambda(t)=D^{it},\qquad \forall t\in \mathbb{R}.
$$
 Now, we define
Haagerup noncommutative $L^{p}$-spaces. Let $L^{0}(\mathcal{N},\tau)$ denote
 the topological $\ast$-algebra of all operators on
$L^{2}(\mathbb{R},\mathcal{H})$ measurable with respect to
$(\mathcal{N},\tau)$. Then the Haagerup noncommutative $L^{p}$-spaces,
$0<p\leq\infty$, are defined by
$$
L^{p}(\mathcal{M},\varphi)=\{x\in L^{0}(\mathcal{N},\tau):
\;\hat{\sigma}_{t}(x)=e^{-\frac{t}{p}}x,\;\forall t\in\mathbb{R}\}.
$$
The spaces  $L^{p}(\mathcal{M},\varphi)$ are closed selfadjoint linear subspaces of
$L^{0}(\mathcal{N},\tau)$.  It is not hard to show
that
$$
L^{\infty}(\mathcal{M},\varphi)=\mathcal{M}.
$$
Since for any $\psi\in\mathcal{M}_{\ast}^{+},$ the dual
weight $\hat{\psi}$ has a Radon-Nikodym derivative with respect to
$\tau,$ denoted by $D_{\psi}:$
$$
\hat{\psi}(x)=\tau(D_{\psi}x),\qquad x\in\mathcal{N}_{+}.
$$
Then
$$
D_{\psi}\in L^{0}(\mathcal{N},\tau)
$$
and
$$
\hat{\sigma}_{t}(D_{\psi})=e^{-t}D_{\psi},\qquad\forall
t\in\mathbb{R}.
$$
So
$$
D_{\psi}\in L^{1}(\mathcal{M},\varphi)_{+}.
$$
It is well known that the map $\psi\mapsto D_\psi$
on $\mathcal{M}_{\ast}^{+}$ extends to a linear homeomorphism from $\mathcal{M}_{\ast}$ onto $L^{1}(\mathcal{M},\varphi)$ (equipped with the vector
space topology inherited from $L^0(\N, \tau)$). This permits to transfer the norm  on $\mathcal{M}_{\ast}$ into a norm
on $L^{1}(\mathcal{M},\varphi)$, denoted by $\|\cdot\|$. Moreover, $L^{1}(\mathcal{M},\varphi)$ is equipped with a distinguished contractive
positive linear functional tr,  defined by
$$
tr (D_\psi) = \psi(1), \qquad \psi\in \mathcal{M}_{\ast}.
$$
Therefore, $\|x\|_1= tr (|x|)$ for every $x\in L^{1}(\mathcal{M},\varphi)$.

Let $0 < p < \8$ and $x\in L^0(\N, \tau)$. If $x = u|x|$ is the polar decomposition of $x$, then
$x \in L^p(\M, \varphi)\; \Leftrightarrow\; u \in\M \;\mbox{and}\; |x| \in L^p(\M, \varphi) \;\Leftrightarrow\; u \in\M \;\mbox{and}\; |x|^p \in L^1(\M, \varphi)$.
If we define
$$
\| x\|_{p}=\| |x|^{p}\|_{1}^{\frac{1}{p}},\qquad \forall x\in
L^{p}(\mathcal{M},\varphi),
$$
then for $1\leq p<\infty$ (resp. $0<p<1$),
$$
(L^{p}(\mathcal{M},\varphi),\;\|\cdot\|_p)
$$
is a Banach space (resp. a quasi-Banach space), and
$$
\| x\|_{p}=\| x^{\ast}\|_{p}=\| |x|\|_{p},\qquad \forall x\in
L^{p}(\mathcal{M},\varphi).
$$

It is proved in \cite{H1} and \cite{Te} that  $L^{p}(\mathcal{M},\varphi)$ is independent of
$\varphi$ up to isometry. Hence,  we denote $L^{p}(\mathcal{M},\varphi)$ by
$L^{p}(\mathcal{M})$.

 The usual Holder inequality also holds for the $L^{p}(\mathcal{M})$ spaces. It means that  the
product of $L^0(\N, \tau),\; (x, y) \mapsto xy$, restricts to a contractive bilinear map
$$
L^{p}(\mathcal{M})\times L^{q}(\mathcal{M})\rightarrow L^{r}(\mathcal{M}),
$$
where  $\frac{1}{r}=\frac{1}{p}+\frac{1}{q}$. In particular, if $\frac{1}{p}+\frac{1}{q}=1$, then the bilinear form $(x, y)\mapsto tr (xy)$ defines a duality bracket between
$L^{p}(\mathcal{M})$ and $L^{q}(\mathcal{M})$, for which $L^{q}(\mathcal{M})$ coincides (isometrically) with the dual of $L^{p}(\mathcal{M})$ (if $p\neq\8$).
Moreover, the $tr$ have the following  property:
$$
tr(xy)=tr(yx),\qquad \forall x\in L^{p}(\mathcal{M}),\quad\forall y\in
L^{q}(\mathcal{M}).
$$

Let $0<p\le\infty$. For $K \subset L^{p}(\mathcal{M})$, we denote  the closed linear span of  $K$ in $L^{p}(\mathcal{M})$ by $[K]_{p}$ (relative to the w*-topology for $p =\infty$) and the set  $\{x^{\ast}:\;x\in K\}$ by $J(K)$.

For $0<p<\infty,\; 0\le \eta\le1$, we have that
$$
L^{p}(\mathcal{M})=[D^{\frac{1-\eta}{p}}\M D^{\frac{\eta}{p}}]_p.
$$

 Let $\mathcal{D}$ be a von Neumann
 subalgebra of $\mathcal{M} $ and $\mathcal{E}$ be  a faithful
normal conditional expectation from $\M$ onto  $\D$.

\begin{definition} A w*-closed subalgebra $\mathcal{A}$ of $\mathcal{M}$
is called a subdiagonal subalgebra of $\mathcal{M}$ with respect to
$\mathcal{E}$(or to $\mathcal{D}$)  if
\begin{enumerate}[\rm(i)]

\item $\mathcal{A}+ J(\mathcal{A})$ is w*-dense in  $\mathcal{M}$,

\item $\mathcal{E}(xy)=\mathcal{E}(x)\mathcal{E}(y),\quad \forall\;x,y\in
\mathcal{A},$

\item $\mathcal{A}\cap J(\mathcal{A})=\mathcal{D},$
\end{enumerate}
The algebra $\mathcal{D}$ is  called the diagonal of $\mathcal{A}$.
\end{definition}
In \cite{A}, subdiagonal subalgebras are not assumed
to be w*-weakly closed. Since the weak* closure of an algebra that is subdiagonal with respect to $\E$ will also be subdiagonal with respect to $\E$ (see Remark 2.1.2 in \cite{A}),
 we may  assume that our subdiagonal subalgebras are always w*-weakly closed (the definition as in \cite{J1,J2,X}). Since $\M$ is $\sigma$-finite, we
may take a faithful normal state $\phi$  on $\M$ such that $\phi\circ\E=\phi$.  It is well known (cf. \cite{T3}) that the existence of a (unique)
normal conditional expectation $\E :\M \rightarrow \D$ such that  $\varphi\circ\E=\varphi$  is equivalent to $\sigma_{t}^\varphi(\D)=\D$  for all $t\in\mathbb{R}$.
Hence, in the rest of this paper $\varphi$ always denotes a  normal  faithful state satisfying
 $\varphi\circ\E=\varphi$.

If $\mathcal{A}$ is not properly contained
in any other subalgebra of $\mathcal{M}$ which is a subdiagonal with respect to
$\mathcal{E}$, We call  $\mathcal{A}$ is a maximal subdiagonal subalgebra of $\mathcal{M}$ with
respect to $\mathcal{E}$ (or to $\mathcal{D}$).   Let
$$
\mathcal{A}_{0}=\{x\in \mathcal{A}:\;\mathcal{E}(x)=0\}
$$
Then by \cite[Theorem  2.2.1]{A}, $\mathcal{A}$ is maximal if and only if
$$
 \mathcal{A}=\{x\in
\mathcal{M}:\;\mathcal{E}(yxz)=\mathcal{E}(yxz)=0,\;\forall y\in
\mathcal{A},\;\forall z\in \mathcal{A}_{0}\}.
$$

 It follows from \cite[Theorem 2.4]{JOS} and \cite[Theorem 1.1]{X} (also see \cite[Theorem 1.1]{L2}) that a subdiagonal subalgebra $\A$ of $\mathcal{M}$ with respect to
 $\mathcal{D}$ is maximal if and only if
\begin{equation}\label{maximal}
\sigma_{t}^\varphi(\A)=\A,\qquad \forall t\in\mathbb{R}.
\end{equation}

In this paper $\mathcal{A}$ always denotes a maximal subdiagonal subalgebra in $\mathcal{M}$ with
respect to $\mathcal{E}$.

\begin{definition}\label{def:hp}   For $0<p<\infty$,  we define the Haagerup
noncommutative $H^{p}$-space that

$$
H^{p}(\mathcal{A})=[\mathcal{A}D^{\frac{1}{p}}]_{p},\qquad H^{p}_{0}(\mathcal{A})=
 [\mathcal{A}_{0}D^{\frac{1}{p}}]_{p}.
$$
\end{definition}

If $1\le p<\infty,\; 0\le \eta\le1$, then by \cite[Proposition 2.1]{J2}, we have that
\begin{equation}\label{equalityHp}
H^{p}(\mathcal{A})=[D^{\frac{1-\eta}{p}}\A D^{\frac{\eta}{p}}]_p,\qquad H^{p}_0(\mathcal{A})=[D^{\frac{1-\eta}{p}}\A_0 D^{\frac{\eta}{p}}]_p.
\end{equation}
By \cite[Proposition 2.7]{BM}, we know that
\begin{equation}\label{charecterization A}
\mathcal{A}=\{x\in\mathcal{M}:\;tr(xa)=0,\; \forall a\in H^{1}_{0}(\A)\}.
\end{equation}
It is known that
\begin{equation} \label{equalityLp(D)}
L^{p}(\mathcal{D})=[D^{\frac{1-\eta}{p}}\mathcal{D} D^{\frac{\eta}{p}}]_p,\qquad  \forall p\in [1,\infty),\quad \forall \eta\in [1,0].
\end{equation}
Therefore, if $1\le p,q,r<\8$ and  $\frac{1}{q}+\frac{1}{r}=\frac{1}{p}$, then
\begin{equation}\label{Hp-multiplication-Hr}
[H^q(\A)D^{\frac{1}{r}}]_p=H^p(\A)\quad\mbox{and}\quad
[L^q(\D)D^{\frac{1}{r}}]_p=L^p(\D)
\end{equation}

For $1\le p\le\8$, the conditional expectation $\mathcal{E}$ extends to a contractive
projection  from $L^p(\mathcal{M})$ onto $L^p(\mathcal{D})$. The extension
will be denoted still by $\mathcal{E}$ (see \cite[Proposition 2.3]{JX}).
Let
$$
1\le r,\;p,\;q\le\8,\quad\frac{1}{r}=\frac{1}{p}+\frac{1}{q}.
$$
Then
$$
\E(xy)=\E(x)\E(y),\qquad\forall x\in H^p(\A),\quad\forall y\in H^q(\A).
$$

Let $\mathcal{M}_{a}$ be the family of analytic vectors in $\mathcal{M}.$
Recall that $x\in\mathcal{M}_{a}$ if only if the function $t\mapsto
\sigma_{t}(x)$ extends to an analytic function from $\mathbb{C}$ to
$\mathcal{M}.$  $\mathcal{M}_{a}$ is a w*-dense $\ast$-subalgebra of
$\mathcal{M}$ (cf. \cite{PT}).

The next result is known. For easy reference, we give its proof
(see the proof of Theorem 2.5 in \cite{J1}).
\begin{lemma}\label{analytic} Let $\mathcal{A}_{a}$ and $\D_a$ be respectively the families of analytic
vectors in $\mathcal{A}$ and $\D$.  If $1\le p<\infty$, then:
\begin{enumerate}[\rm(i)]
\item $\mathcal{A}_{a}$ is a w*-dense in $\mathcal{A}$, $(\A_{a})_0$ is a w*-dense in $\A_0$ and $\D_{a}$ is a w*-dense in $\D$,
where $(\A_a)_{0}=\{x\in \A_a:\;\mathcal{E}(x)=0\}$;

\item
$$
D^{\pm\frac{1}{p}}\A_{a}=\A_{a}D^{\pm\frac{1}{p}},\qquad D^{\pm\frac{1}{p}}(\A_{a})_0=(\A_{a})_0D^{\pm\frac{1}{p}},\qquad
D^{\pm\frac{1}{p}}\D_{a}=\D_{a}D^{\pm\frac{1}{p}};
 $$
 \item
$\mathcal{A}_{a}D^{\frac{1}{p}}$ is dense in $H^{p}(\A)$, $(\mathcal{A}_{a})_0D^{\frac{1}{p}}$ is dense in $H^{p}_0(\A)$ and $\D_{a}D^{\frac{1}{p}}$ is dense in $L^{p}(\D)$.
\end{enumerate}
\end{lemma}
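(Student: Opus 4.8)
The plan is to build all three parts from the Gaussian averaging (smoothing) operators
$$
\Phi_n(x)=\sqrt{\tfrac{n}{\pi}}\int_{\real}e^{-nt^2}\sigma_{t}^\varphi(x)\,dt,\qquad x\in\M,\ n\in\nat,
$$
interpreted as $w^*$-integrals. It is standard that $\Phi_n(x)\in\M_a$ for every $x$, that $\|\Phi_n(x)\|\le\|x\|$, and that $\Phi_n(x)\to x$ in the $w^*$-topology. These operators are the common engine: part (1) comes from applying them in $\M$, part (2) from the explicit formula $\sigma_z^\varphi(x)=D^{iz}xD^{-iz}$ on analytic elements, and part (3) from transporting the same averaging to the $L^p$-level.

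For (1) I would first invoke maximality in the form \eqref{maximal}, namely $\sigma_t^\varphi(\A)=\A$ for all $t\in\real$. When $x\in\A$, each $\sigma_t^\varphi(x)$ lies in the $w^*$-closed convex set $\A$, and $\Phi_n(x)$ is a $w^*$-limit of convex combinations (Riemann sums) of such elements; hence $\Phi_n(x)\in\A\cap\M_a=\A_a$, and $\Phi_n(x)\to x$ gives the $w^*$-density of $\A_a$ in $\A$. The same works for $\D$ once one recalls that the $\varphi$-preserving expectation $\E$ commutes with the modular group, so $\sigma_t^\varphi(\D)=\D$. For $(\A_a)_0$ I would additionally use that $\E$ is normal, hence $w^*$-continuous, to pass it through the integral: if $\E(x)=0$ then $\E(\Phi_n(x))=\Phi_n(\E(x))=0$, so $\Phi_n(x)\in(\A_a)_0$.

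For (2) the key point is that for analytic $x$ the map $z\mapsto\sigma_z^\varphi(x)=D^{iz}xD^{-iz}$ is entire, so evaluating at $z=\mp i/p$ yields $D^{\pm1/p}xD^{\mp1/p}=\sigma_{\mp i/p}^\varphi(x)$. It then remains to check these values stay in $\A_a$ (resp. $(\A_a)_0$, $\D_a$). This I would obtain by analytic continuation against functionals: if a $w^*$-continuous $f$ annihilates $\A$, then $z\mapsto f(\sigma_z^\varphi(x))$ is entire and vanishes on $\real$ by \eqref{maximal}, hence vanishes identically, so $\sigma_z^\varphi(x)\in\A$ for all $z$; it is moreover again analytic. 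For $(\A_a)_0$ one combines this with $\E\circ\sigma_z^\varphi=\sigma_z^\varphi\circ\E$ on analytic elements (again from continuation off the real axis) to get $\E(\sigma_{\mp i/p}^\varphi(x))=\sigma_{\mp i/p}^\varphi(\E(x))=0$. Rewriting $D^{\pm1/p}x=\sigma_{\mp i/p}^\varphi(x)\,D^{\pm1/p}$ then gives both inclusions $D^{\pm1/p}\A_a\subseteq\A_aD^{\pm1/p}$ and, symmetrically, the reverse, hence equality, and likewise for $(\A_a)_0$ and $\D_a$.

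For (3), since $H^p(\A)=[\A D^{1/p}]_p$ it suffices to approximate each $aD^{1/p}$ with $a\in\A$ in $L^p$ by elements of $\A_aD^{1/p}$. Setting $\xi=aD^{1/p}$ and using that $D^{-it}$ commutes with $D^{1/p}$, one has the crucial relation
$$
\sigma_t^\varphi(a)D^{1/p}=D^{it}aD^{-it}D^{1/p}=D^{it}\xi D^{-it}.
$$
As $D^{it}$ is unitary in $\N$ and $\|\cdot\|_p$ is unitarily invariant, $t\mapsto D^{it}\xi D^{-it}$ is a bounded orbit which, for $1\le p<\8$, is $\|\cdot\|_p$-continuous; the Gaussian kernel being an approximate identity, the $L^p$-valued integral $\sqrt{n/\pi}\int e^{-nt^2}D^{it}\xi D^{-it}\,dt$ converges to $\xi$ in $L^p$. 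Identifying this integral with $\Phi_n(a)D^{1/p}$, where $\Phi_n(a)\in\A_a$ by part (1), finishes the approximation; the cases $H^p_0(\A)=[\A_0D^{1/p}]_p$ and $L^p(\D)=[\D D^{1/p}]_p$ (the $\eta=0$ instance of \eqref{equalityLp(D)}) follow verbatim. I expect the main obstacle to be exactly this last identification together with the $\|\cdot\|_p$-continuity of the modular orbit: one must justify that right multiplication by $D^{1/p}$ commutes with the $w^*$-integral defining $\Phi_n(a)$, which I would do by a duality argument, noting that $y\mapsto yD^{1/p}$ is $w^*$-to-weakly continuous because its adjoint sends $z\in L^{p'}(\M)$ to $D^{1/p}z\in L^1(\M)=\M_*$, and then pass the norm limit through.
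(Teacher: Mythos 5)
Your proposal is correct. Parts (1) and (2) follow essentially the same route as the paper: Gaussian smoothing along the modular group combined with the maximality condition \eqref{maximal} for (1) (the paper handles $(\A_a)_0$ via $\sigma_t^\varphi(\A_0)=\A_0$ rather than via $\E\circ\Phi_n=\Phi_n\circ\E$, but these are interchangeable), and the identity $D^{\pm 1/p}x=\sigma_{\mp i/p}^\varphi(x)D^{\pm 1/p}$ for (2), where your analytic-continuation argument against annihilating functionals supplies a justification of $\sigma_z^\varphi(x)\in\A_a$ that the paper merely asserts. Part (3) is where you genuinely diverge: the paper runs a Hahn--Banach duality argument, observing that for $y\in L^{p'}(\M)$ annihilating $\A_aD^{1/p}$ the functional $a\mapsto tr(aD^{1/p}y)$ is $w^*$-continuous on $\M$ because $D^{1/p}y\in L^1(\M)\cong\M_*$, so it annihilates all of $\A D^{1/p}$ by the $w^*$-density from (1); you instead prove norm convergence $\Phi_n(a)D^{1/p}\to aD^{1/p}$ directly via the approximate-identity property of the Gaussian kernel. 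Your route is more constructive (it produces the approximating sequence explicitly), but it leans on the strong $\|\cdot\|_p$-continuity of $t\mapsto\sigma_t^\varphi(a)D^{1/p}$ --- equivalently, that right multiplication of a fixed $\xi\in L^p(\M)$ by a bounded strongly$^*$ convergent net is $\|\cdot\|_p$-continuous --- which is true but is itself a nontrivial fact about Haagerup $L^p$-spaces that you should either prove or cite; the paper's duality argument sidesteps exactly this point, needing only the trivial $w^*$-continuity coming from $L^1(\M)=\M_*$. The interchange of right multiplication by $D^{1/p}$ with the $w^*$-integral, which you identify as the main obstacle and resolve by testing against $L^{p'}(\M)$, is handled correctly.
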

\begin{proof} (i) Let $x\in\mathcal{A}$. We define
$$
x_{n}=\sqrt{\frac{n}{\pi}}\int_{\mathbb{R}}e^{-nt^{2}}\sigma_{t}(x)dt.
$$
By \eqref{maximal},  $x_{n}\in\mathcal{A}$. Moreover by
\cite[p. 58]{PT}, $x_{n}\in\mathcal{A}_{a}$ and $x_{n}\rightarrow x$ w*-weakly. Since
$$
\sigma_{t}^\varphi(\A_0)=\A_0,\quad \sigma_{t}^\varphi(\D)=\D,\qquad \forall t\in\mathbb{R}
$$
(see \cite[p. 313]{JOS}), a similar argument works for $\A_0$ and $\D$.

(ii) We prove only the first equivalence. The proofs of the two others are
similar. Let $x\in \mathcal{A}_{a}$. Then
$$
D^{\pm\frac{1}{p}}x=[D^{\pm\frac{1}{p}}xD^{\mp\frac{1}{p}}]D^{\pm\frac{1}{p}}
=[\sigma_{\mp\frac{i}{p}}(x)]D^{\pm\frac{1}{p}}\in
\mathcal{A}_{a}D^{\pm\frac{1}{p}},
$$
whence $D^{\pm\frac{1}{p}}x\subseteq
\mathcal{A}_{a}D^{\pm\frac{1}{p}}$. The inverse
inclusion can be proved in a similar way.

(iii) Let $p'$ be the conjugate index of $p$. If $y\in L^{p'}(\M)$
such that $tr(aD^{\frac{1}{p}}y)=0,\; \forall a\in\mathcal{A}_{a}$, then  by (i),
$$
tr(aD^{\frac{1}{p}}y)=0,\qquad \forall a\in\A,
$$
since $D^{\frac{1}{p}}y\in L^{1}(\M)$.
Hence, by \eqref{equalityHp},
$$
tr(xy)=0,\qquad \forall x\in H^{p}(A)
$$
 By the Hahn-Banach theorem,
$\A_{a}D^{\frac{1}{p}}$ is dense in
$H^{p}(\A)$. Similarly, we can prove the two others.

\end{proof}

\section{$\A$-invariant subspaces of $L^p(\M)$}

We recall that a right (resp. left) $\A$-invariant subspace of $L^p(\M)$, is a closed subspace $K$
of $L^p(\M)$ such that $K\mathcal{A}\subset K$ (resp. $\mathcal{A}K\subset K$).

In the case when von Neumann algebra $\M$ is finite,  for a right $\A$-invariant subspace $K$  of $L^2(\M)$, Blecher and Labuschagne \cite{BL3} defined
{\it the right wandering subspace} of $K$ to be the space $W = K\ominus[K\A_0]_2$;
and they say that $K$ is type 1 if $W$ generates $K$ as an $\A$-module (that is, $K=[W\A]_2$) and
say that $K$ is type 2 if $W = \{0\}$ (also see \cite{NW}, but the last notation conflicts with that of
\cite{NW}, where this class of subspaces is decomposed into two further subclasses which
Nakazi and Watatani call type {\rm II} and type {\rm III}).  If $p\neq2$,  Blecher and Labuschagne \cite{BL3} defined the wandering quotient to
be $K/[K\A_0]_p$, and say that $K$ is type 2 if this is trivial. It turns out that the wandering quotient is
an $L^p(\D)$-module in the sense of
Junge and Sherman (see \cite{JS}), and it is isometric to a canonically defined
subspace of $K$ which can be called the right wandering subspace of $K$. They say that
$K$ is type 1 if this subspace generates $K$ as an $\A$-module. For the case $1 \le p < 2$ (resp. $p > 2$),
they have shown that $K$ is type 1 iff $K\cap L^2(\M)$ (resp. $[K]_2$) is type 1 in the sense
of the $L^2$ case above.

Now, in the case that $\mathcal{M}$ is  a $\sigma$-finite von Neumann algebra.  Recall that if $K$ is a right $\mathcal{A}$-invariant subspace of $L^{2}({\mathcal{M}})$, then
$$
W=K\ominus[K\mathcal{A}_{0}]_{2}
$$
is often called  the right wandering subspace of $K$. We say that  $K$ is  type 1 if $W$ generates $K$ as an
$\mathcal{A}$-module (that is $K=[WA]_{2})$ and   $K$ is  type 2 if $W=\{0\}$ (see \cite{L2}).

\begin{proposition}\label{Lp-multiplication} Let $1\le p,q,r<\8$, and $K$ be a closed subspace of $L^{p}(\mathcal{M})$.
 Suppose $\frac{1}{p}-\frac{1}{r}=\frac{1}{q}$ and $K_r=\{x\in K:\;x D^{-\frac{1}{r}}\in L^q(\M)\}$. If $[K_r]_p=K$, then
$$
[[K_rD^{-\frac{1}{r}}]_q D^{\frac{1}{r}}]_p=K.
$$
\end{proposition}
\begin{proof} (1)  If $x\in [K_rD^{-\frac{1}{r}}]_q$, then there is a sequence $(x_n)\subset K_r$ such that $x_nD^{-\frac{1}{r}}\rightarrow x$ in norm in $L^q(\M)$.  Hence, $x_n\rightarrow xD^{\frac{1}{r}}$ in norm in $L^p(\M)$. It follows that $[K_rD^{-\frac{1}{r}}]_q D^{\frac{1}{r}}\subset K$, and so $[[K_rD^{-\frac{1}{r}}]_q D^{\frac{1}{r}}]_p\subset K$. On the other hand, since $K_r\subset[K_rD^{-\frac{1}{r}}]_q D^{\frac{1}{r}}$, $K=[K_r]_p\subset [[K_rD^{-\frac{1}{r}}]_q D^{\frac{1}{r}}]_p$. Therefore, we obtain the desired result.
\end{proof}

\begin{lemma}\label{invariant space}
 Let $1\le p<\8$, and let $K$ be an $\mathcal{A}$-invariant subspace of
$L^{p}({\mathcal{M}})$.
\begin{enumerate}[\rm(i)]
\item If  $1\le q,r<\8$ and $\frac{1}{p}-\frac{1}{r}=\frac{1}{q}$, then
  $[K_rD^{-\frac{1}{r}}]_q$ is a right $\mathcal{A}$-invariant subspace of
$L^{q}({\mathcal{M}})$,  where $K_r=\{x\in K:\;x D^{-\frac{1}{r}}\in L^q(\M)\}$.
   \item If $1\le q,r<\8$ and $\frac{1}{p}+\frac{1}{r}=\frac{1}{q}$, then $[KD^\frac{1}{r}]_q$ is a right $\mathcal{A}$-invariant subspace of $L^{q}({\mathcal{M}})$.
\end{enumerate}
\end{lemma}
\begin{proof} (i) It is clear that $[K_rD^{-\frac{1}{r}}]_q\subset L^q(\M)$. Using (ii) of Lemma \ref{analytic}, we get that
\begin{equation}\label{A-analytic-invariant}
K_rD^{-\frac{1}{r}}\mathcal{A}_a=K_r\mathcal{A}_aD^{-\frac{1}{r}}.
\end{equation}
On the other hand, for any $a\in\A_a$ and $x\in K_r$, we have that $xa\in K$. By \eqref{A-analytic-invariant}, there is an element  $a'$ of $\A_a$ such that $xaD^{-\frac{1}{r}}=xD^{-\frac{1}{r}}a'$. It follows that $xaD^{-\frac{1}{r}}\in L^q(\M)$, and so $xa\in K_r$. Hence, $K_r\A_a\subset K_r$. From \eqref{A-analytic-invariant} follows that $K_rD^{-\frac{1}{r}}\mathcal{A}_a\subset K_rD^{-\frac{1}{r}}$ and
\begin{equation}\label{A-analytic-invariant-2}
[K_rD^{-\frac{1}{r}}\mathcal{A}_a]_q\subset [K_rD^{-\frac{1}{r}}]_q.
\end{equation}

Now  if $a\in \A$, then by (i) in Lemma \ref{analytic}, we have  a sequence $(a_n)$ in $\A_a$ such that
$a_{n}\rightarrow a$ w*-weakly. Hence,
$$
tr(xD^{-\frac{1}{r}}a_ny)\rightarrow tr(xD^{-\frac{1}{r}}ay),\qquad \forall x\in K_r,\quad\forall y\in L^{q'}(\M),
$$
where $q'$ is the conjugate index of $q$.
Since the weak closure of $K_rD^{-\frac{1}{r}}\mathcal{A}_a$ is equal to $[K_rD^{-\frac{1}{r}}\mathcal{A}_a]_q$,
$$
xD^{-\frac{1}{r}}a\in [K_rD^{-\frac{1}{r}}\mathcal{A}_a]_q.
$$
Using \eqref{A-analytic-invariant-2}, we get
$$
[K_rD^{-\frac{1}{r}}\A]_q\subset [K_rD^{-\frac{1}{r}}]_q.
$$
Therefore,
$$
[K_rD^{-\frac{1}{r}}]_q\A\subset [K_rD^{-\frac{1}{r}}]_q.
$$
(ii) can be proved in a similar way.

\end{proof}

Using  same method as  in the proof of Lemma \ref{invariant space}, we get the following result.
\begin{lemma}\label{closure-subspace-subdiagonal}
 Let $1\le p<\8$, and let $K\subset L^{p}({\mathcal{M}})$.
If  $1\le q,r<\8$ and  $\frac{1}{p}+\frac{1}{r}=\frac{1}{q}$, then
$$
[[K\A]_p D^{\frac{1}{r}}]_q=[KD^\frac{1}{r}\A ]_q,\qquad [[K\A_0]_q D^{\frac{1}{r}}]_q=[KD^\frac{1}{r}\A_0 ]_q
$$
and
$$
[[K\D]_p D^{\frac{1}{r}}]_q=[KD^\frac{1}{r}\D ]_q.
$$
\end{lemma}

\begin{lemma}\label{hp-spaces-carecterization}  Let $1\le p<\8$.
If  $1< q,r<\8$ and  $\frac{1}{p}-\frac{1}{r}=\frac{1}{q}$, then
$$
H^p(\A)D^{-\frac{1}{r}}\cap L^q(\M)= H^q(\A)
\quad\mbox{and}\quad
D^{-\frac{1}{r}}H^p(\A)\cap L^q(\M)= H^q(\A).
$$
\end{lemma}
\begin{proof} Let $x\in H^p(\A)D^{-\frac{1}{r}}\cap L^q(\M)$. Then there is an element $y\in H^p(\A)$ such that $x=yD^{-\frac{1}{r}}$. If  $q'$ (resp. $p'$) is the conjugate index of $q$ (resp. $p$), then $\frac{1}{q'}=\frac{1}{p'} +\frac{1}{r}$. Hence,
$$
tr(xD^{\frac{1}{q'}}a)=tr(yD^{-\frac{1}{r}}D^{\frac{1}{q'}}a)=tr(yD^{\frac{1}{p'}}a)=0, \qquad \forall a\in\A_0.
$$
Using \eqref{equalityHp}, we get $x\bot J(H^{q'}_0(\A))$. By \cite[Corollary 3.4]{J2} (or \cite[(2.13)]{BM}), $x\in H^q(\A)$, and so $ H^p(\A)D^{-\frac{1}{r}}\cap L^q(\M)\subset H^q(\A)$. Conversely, from
$H^q(\A)D^{\frac{1}{r}}\subset H^p(\A)$ it follows that $ H^p(\A)D^{-\frac{1}{r}}\cap L^p(\M)\supset H^q(\A)$. Thus, we obtain the first result. The second result follows analogously.
\end{proof}

\begin{definition}\label{wanderingsubspace}
 Let $1\le p<\8$, and let $K$ be a  right $\mathcal{A}$-invariant subspace of
$L^{p}({\mathcal{M}})$.
\begin{enumerate}[\rm(i)]
  \item  If $1 \le p \le 2,\;\frac{1}{p}-\frac{1}{r}=\frac{1}{2}$  and $W$ is the right wandering subspace of $[KD^{-\frac{1}{r}}\cap L^2(\M)]_2$, we define the right wandering subspace of K  to be the $L^p$-closure of  $WD^{\frac{1}{r}}$.
  \item  If
$2\le p <\8,\;\frac{1}{p}+\frac{1}{r}=\frac{1}{2}$ and $W$ is the right wandering subspace of
$[KD^\frac{1}{r}]_2$, we define the right wandering subspace of K  to be the $L^p$-closure of $W_rD^{-\frac{1}{r}}$,
where $W_r=\{x\in W:\;x D^{-\frac{1}{r}}\in L^p(\M)\}$.
\end{enumerate}

\end{definition}

If $K$
is a right $\mathcal{A}$-invariant subspace of $L^p(\mathcal{M})$, we  say that $K$ is type 1 if the right wandering subspace of $K$  generates $K$ as an
$\mathcal{A}$-module, and $K$ is type 2 if  $1 \le p < 2$ (resp. $p > 2$) and  $K=[K\mathcal{A}_{0}]_p$ (resp. $[KD^{\frac{1}{r}}]_2=[KD^{\frac{1}{r}}\mathcal{A}_{0}]_2$, where $\frac{1}{p}+\frac{1}{r}=\frac{1}{2}$).

To extend the result in \cite{L2} to the setting of the Haagerup noncommutative
$L^p$-spaces ($1\le p<\8$), we will use the column $L^p$-sum studied by Junge and Sherman \cite{JS} to investigate this: If $X$ is a subspace
of $L^p(\M)$, and if $\{X_i : i \in I\}$ is a collection of subspaces of X, which together
densely span $X$, with the property that $X^*_i X_j=\{0\}$ if $i\neq j$, then we say that $X$
is the internal column $L^p$-sum $\oplus_i^{col}X_i$.

\begin{theorem}\label{invariant}
Let $1\le p<2$ and  $K$ be a right
 $\mathcal{A}$-invariant subspace of $L^p(\mathcal{M})$. Suppose $\frac{1}{p}-\frac{1}{r}=\frac{1}{2}$ and $K_r=\{x\in K:\;x D^{-\frac{1}{r}}\in L^2(\M)\}$. If $[K_r]_p=K$, then:
 \begin{enumerate}[\rm(i)]
 \item  $K$ may be written uniquely as an $L^p$-column sum $Z\oplus^{col} [Y\A]_p$, where  $Z$ is a type 2
 right $\mathcal{A}$-invariant subspace of $L^p(\M)$, $Y$ is the right wandering subspace of $K$ such that $Y=[Y\D]_p$ and $J(Y)Y\subset L^{\frac{p}{2}}(\D)$.

\item  If $K \neq\{0\}$ then K is type 1 if and only if $K=\oplus_{i}^{col}u_{i}H^p(\mathcal{A})$,
 for $u_{i}$ partial
isometries with mutually orthogonal ranges and $u_{i}^{*}u_{i}\in \mathcal{D}$.
\item If $K = K_1 \oplus^{col} K_2$  where $K_1$ and $K_2$ are types 2 and 1 respectively, then
the right wandering subspace for $K$ equals the right wandering subspace for
$K_2$.
\item The wandering quotient $K/[K\A_0]p$ is isometrically $\D$-isomorphic to the
right wandering subspace of K.
 \item The wandering subspace $W$ of $K$ is an $L^p(\D)$-module in the sense of
Junge and Sherman.
  \end{enumerate}
 \end{theorem}
\begin{proof} (i)  By Lemma \ref{invariant space}, $K'=[K_rD^{-\frac{1}{r}}]_2$  is a right $\mathcal{A}$-invariant subspace of $L^{2}({\mathcal{M}})$. Using Theorem 2.3 and 2.8 in \cite{L2}, we have that
$$
K'=Z'\oplus^{col} [Y'\A]_2,
$$
 where  $Z'$ is a type 2
 right $\mathcal{A}$-invariant subspace of $L^2(\M)$ and  $Y'$ is the right wandering subspace of $K'$ with $Y'=[Y'\D]_2$ and $J(Y')Y'\subset L^{1}(\D)$. Let $Z=[Z'D^{\frac{1}{r}}]_p$ and $Y=[Y'D^{\frac{1}{r}}]_p$. By Lemma \ref{invariant space} and Definition \ref{wanderingsubspace}, $Z$ is a right $\mathcal{A}$-invariant subspaces of $L^p(\M)$ and $Y$ is the right wandering subspace of $K$. Using Lemma \ref{closure-subspace-subdiagonal}, we know that $[[Y'\A]_2 D^{\frac{1}{r}}]_p=[Y\A]_p $. For any $x\in Z', y\in [Y'\A]_2$, we have that $x^*y=0$, and so
$$
D^{\frac{1}{r}}x^*yD^{\frac{1}{r}}=0.
$$
Hence, $J(Z)[Y\A]_p=\{0\}$. On the other hand, by Proposition \ref{Lp-multiplication}, $K=[K'D^{\frac{1}{r}}]_p$. Therefore,
$$
K=Z\oplus^{col} [Y\A]_p.
$$
Since $Z'=[Z'\A_0]_2,\;Y'=[Y'\D]_2$, by Lemma \ref{closure-subspace-subdiagonal},
$$
Z=[Z'D^{\frac{1}{r}}]_p=[[Z'\A_0]_2D^{\frac{1}{r}}]_p=[Z'\A_0D^{\frac{1}{r}}]_p=[Z'D^{\frac{1}{r}}\A_0]_p=[Z\A_0]_p
$$
and
$$
Y=[Y'D^{\frac{1}{r}}]_p=[[Y'\D]_2D^{\frac{1}{r}}]_p=[Y'\D D^{\frac{1}{r}}]_p=[Y'D^{\frac{1}{r}}\D]_p=[Y\D]_p.
$$
Since
$$
J(Y'D^{\frac{1}{r}})Y'D^{\frac{1}{r}}=D^{\frac{1}{r}}J(Y')Y'D^{\frac{1}{r}}\subset D^{\frac{1}{r}}L^1(\D)D^{\frac{1}{r}}\subset L^{\frac{p}{2}}(\D),
$$
 it follows that $J(Y)Y\subset L^{\frac{p}{2}}(\D)$.

Now we prove the uniqueness.  Suppose that $Z_1$ is a type 2
 right $\mathcal{A}$-invariant subspace of $L^p(\M)$ and $Y_1$ is the right wandering subspace of $K$ such that
 $$
 K=Z_1\oplus^{col} [Y_1\A]_p\quad\mbox{and}\quad Y_1=[Y_1\D]_p.
 $$
Since $Y_1$ is the right wandering subspace of $K$, by Definition \ref{wanderingsubspace}, $Y_1=[Y_1'D^{\frac{1}{r}}]_p$, where
$Y_1'$ is the right wandering subspace of $[KD^{-\frac{1}{r}}\cap L^2(\M)]_2=[K_rD^{-\frac{1}{r}}]_2$. By by the uniqueness
assertion in Theorem 2.3 of \cite{L2},  $Y'=Y'_1$. It follows that  $Y_1=Y$. From $ K=Z_1\oplus^{col} [Y\A]_p=Z\oplus^{col} [Y\A]_p$, we obtain that  $Z_1=Z$.

(ii)  Let $K \neq\{0\}$ and $K$ is type 1. From the proof of (1), we know that  $[K_rD^{-\frac{1}{r}}]_2$  is type 1. So, by \cite[(ii) of Theorem 2.8]{L2}, there are partial
isometries $u_i$  with mutually orthogonal ranges such that $u_i^*u_i \in\D$,
$$
[K_rD^{-\frac{1}{r}}]_2 = \oplus^{col}_i u_i H_2(\A).
$$
Using Proposition \ref{Lp-multiplication} and \eqref{equalityHp}, we get
$$
\begin{array}{rl}
K&=[[K_rD^{-\frac{1}{r}}]_2D^{\frac{1}{r}}]_p=\oplus^{col}_i [u_i H_2(\A)D^{\frac{1}{r}}]_p\\
&=\oplus^{col}_i u_i[ H_2(\A)D^{\frac{1}{r}}]_p=\oplus^{col}_i u_i H_p(\A)
\end{array}
$$

Conversely, let for
$u_i$ as above,
$$
K=\oplus_{i}^{col}u_{i}H^p(\mathcal{A}).
$$
By Lemma \ref{hp-spaces-carecterization}, $[H^p(\A)D^{-\frac{1}{r}}\cap L^2(\M)]_2=H^2(\mathcal{A})$. Hence,
$$
[K_rD^{-\frac{1}{r}}]_2=\oplus_{i}^{col}u_{i}[H^p(\A)D^{-\frac{1}{r}}\cap L^2(\M)]_2
=\oplus_{i}^{col}u_{i}H^2(\mathcal{A}).
$$
 So
 $$
 [K_rD^{-\frac{1}{r}}\A_0]_2=\oplus_{i}^{col}u_{i}H^2_0(\mathcal{A}).
$$
Hence, the right wandering subspace $W$ of $[K_rD^{-\frac{1}{r}}]_2$  satisfies
$$
W=\oplus_{i}^{col}u_{i}L^2(\D).
$$
By Definition \ref{wanderingsubspace} and \eqref{Hp-multiplication-Hr}, $\oplus_{i}^{col}u_{i}L^p(\D)$ is the right wandering subspace of $K$. Since
$$
[\oplus_{i}^{col}u_{i}L^p(\D)\A]_p=\oplus_{i}^{col}u_{i}H^p(\A)=K,
$$
 $K$ is type 1.

(iii)
Set $K_1^{(r)}=\{x\in K_1:\;x D^{-\frac{1}{r}}\in L^2(\M)\}$ and $K_2^{(r)}=\{x\in K_2:\;x D^{-\frac{1}{r}}\in L^2(\M)\}$. If $x\in K_r$, then there exist $z\in K_1$ and $y\in K_2$ such that
$x=z+y$  and $z^*y=0$. It follows that $|xD^{-\frac{1}{r}}|^2=|zD^{-\frac{1}{r}}|^2+|yD^{-\frac{1}{r}}|^2$,  and so $|xD^{-\frac{1}{r}}|\ge|zD^{-\frac{1}{r}}|$, $|xD^{-\frac{1}{r}}|\ge|yD^{-\frac{1}{r}}|$. Since $x D^{-\frac{1}{r}}\in L^2(\M)\subset L^0(\N)$, we get $zD^{-\frac{1}{r}},\:  yD^{-\frac{1}{r}}\in L^0(\N)$. On the other hand,
$$
\hat{\sigma}_{t}(D^{\frac{1}{r}})=e^{-\frac{t}{r}}D^{\frac{1}{r}},\qquad\forall t\in\mathbb{R}.
$$
Hence,
$$
1=\hat{\sigma}_{t}(D^{-\frac{1}{r}}D^{\frac{1}{r}})=e^{-\frac{t}{r}}D^{\frac{1}{r}}\hat{\sigma}_{t}(D^{-\frac{1}{r}}),\qquad\forall t\in\mathbb{R},
$$
so that
$$
\hat{\sigma}_{t}(D^{-\frac{1}{r}})=e^{\frac{t}{r}}D^{-\frac{1}{r}},\qquad\forall t\in\mathbb{R}.
$$
Moreover,
$$
\begin{array}{l}
\hat{\sigma}_{t}(zD^{-\frac{1}{r}})=\hat{\sigma}_{t}(z)\hat{\sigma}_{t}(D^{-\frac{1}{r}})=e^{-\frac{t}{p}+\frac{t}{r}}zD^{-\frac{1}{r}}=e^{-\frac{t}{2}}zD^{-\frac{1}{r}}\\
\hat{\sigma}_{t}(yD^{-\frac{1}{r}})=\hat{\sigma}_{t}(y)\hat{\sigma}_{t}(D^{-\frac{1}{r}})=e^{-\frac{t}{p}+\frac{t}{r}}yD^{-\frac{1}{r}}=e^{-\frac{t}{2}}yD^{-\frac{1}{r}},\qquad\forall t\in\mathbb{R}.
\end{array}
$$
Thus  $zD^{-\frac{1}{r}},\:  yD^{-\frac{1}{r}}\in L^{2}(\mathcal{M})$, i.e., $z\in K_1^{(r)}$ and $y\in K_2^{(r)}$.

Next, we prove that $[K_1^{(r)}]_p=K_1$. To this end let $P: K\rightarrow K_1$ be the projection operator. From the above, we know that $P(K_r)\subset K_1^{(r)}$. If $a\in K_1$, then $a\in K$. Since $[K_r]_p=K$, there exists a sequence $(a_n)\subset K_r$ such that $a_n\rightarrow a$. Hence $P(a_n)\rightarrow P(a)=a$. It follows that $a\in[K_1^{(r)}]_p$, Therefore, $[K_1^{(r)}]_p=K_1$. Similarly,
$[K_2^{(r)}]_p=K_2$.

 $[K_rD^{-\frac{1}{r}}]_2$  is a right $\mathcal{A}$-invariant subspace of $L^{2}({\mathcal{M}})$ and
$$
[K_rD^{-\frac{1}{r}}]_2=[K_1^{(r)}D^{-\frac{1}{r}}]_2 \oplus^{col} [K_2^{(r)}D^{-\frac{1}{r}}]_2
$$
 From the proof of (1), it follows that $[K_1^{(r)}D^{-\frac{1}{r}}]_2$ and $[K_2^{(r)}D^{-\frac{1}{r}}]_2$  are types 2 and 1 respectively. By \cite[Proposition 2.7]{L2}, the right wandering subspace for $[K_rD^{-\frac{1}{r}}]_2$ equals the right wandering subspace for $[K_2^{(r)}D^{-\frac{1}{r}}]_2$. By Definition \ref{wanderingsubspace}, we obtain  the desired result.

(iv) By (i), (ii) and (iii), we get that
$$
K=Z\oplus_{i}^{col}u_{i}H^p(\mathcal{A}),
$$
where $Z$ is a type 2, and
 $u_{i}$ are partial
isometries with mutually orthogonal ranges such that $u_{i}^{*}u_{i}\in \mathcal{D}$ and $\oplus_{i}^{col}u_{i}L^p(\D)$  is the right wandering subspace of $K$. Using the properties of $\E$, similar to the proof (2) of Theorem 4.5 in \cite{BL3}, we prove the desired result. We omit the details.

(v) Since $J(W)W\subset L^{\frac{p}{2}}(\D)$, $W$ is a right $L^p(\D)$-module with inner product $\langle\xi,\eta\rangle=\xi^*\eta$ (see \cite[Definition 3.3]{JS}).
\end{proof}

\begin{lemma}\label{wandering-2}
Let $2< p<\8,\;\frac{1}{p}+\frac{1}{r}=\frac{1}{2}\;(r>2)$ and  $K$ be a right
$\mathcal{A}$-invariant subspace of $L^p(\mathcal{M})$.  If $Y$ is the right wandering subspace of $[KD^\frac{1}{r}]_2$, then $[Y_r]_2=Y$,
 where $Y_r=\{x\in Y:\;x D^{-\frac{1}{r}}\in L^p(\M)\}$.
\end{lemma}
\begin{proof} Let $K'=[KD^\frac{1}{r}]_2$. Then $K'=[K'\A_0]_2\oplus Y$. By \cite[Theorem 2.3 and 2.8]{L2}, $Y=\oplus_{i}^{col}u_{i}L^2(\D)$ where
 $u_{i}$ are partial
isometries with mutually orthogonal ranges such that $u_{i}^{*}u_{i}\in \mathcal{D}$.
Since $\oplus_{i}^{col}u_{i}L^p(\D)D^\frac{1}{r}\subset Y_r$, using \eqref{Hp-multiplication-Hr}, we get $[Y_r]_2=Y$.
\end{proof}

Similar to Theorem \ref{invariant}, we have the following result.

\begin{theorem}\label{invariant-2}
Let $2< p<\8,\;\frac{1}{p}+\frac{1}{r}=\frac{1}{2}$ and  $K$ be a right
 $\mathcal{A}$-invariant subspace of $L^p(\mathcal{M})$.  If $K=[[KD^{\frac{1}{r}}]_2D^{-\frac{1}{r}}\cap L^p(\M)]_p$, then:
 \begin{enumerate}[\rm(i)]
 \item  $K$ may be written uniquely as an $L^p$-column sum $Z\oplus^{col} [Y\A]_p$, where  $Z$ is a type 2
 right $\mathcal{A}$-invariant subspace of $L^p(\M)$, $Y$ is the right wandering subspace of $K$ such that $Y=[Y\D]_p$ and $J(Y)Y\subset L^{\frac{p}{2}}(\D)$.

\item  If $K \neq\{0\}$ then K is type 1 if and only if $K=\oplus_{i}^{col}u_{i}H^p(\mathcal{A})$,
 for $u_{i}$ partial
isometries with mutually orthogonal ranges and $u_{i}^{*}u_{i}\in \mathcal{D}$.
\item If $K = K_1 \oplus^{col} K_2$  where $K_1$ and $K_2$ are types 2 and 1 respectively, then
the right wandering subspace for $K$ equals the right wandering subspace for
$K_2$.
\item The wandering quotient $K/[K\A_0]p$ is isometrically $\D$-isomorphic to the
right wandering subspace of K.
 \item The wandering subspace $W$ of $K$ is an $L^p(\D)$-module in the sense of
Junge and Sherman.
  \end{enumerate}
 \end{theorem}
\begin{proof} (i)  By Lemma \ref{invariant space}, $K'=[KD^{\frac{1}{r}}]_2$  is a right $\mathcal{A}$-invariant subspace of $L^{2}({\mathcal{M}})$. Using Theorem 2.3 and 2.8 in \cite{L2}, we have that
$$
K'=Z'\oplus^{col} [Y'\A]_2,
$$
 where  $Z'$ is a type 2
 right $\mathcal{A}$-invariant subspace of $L^2(\M)$ and  $Y'$ is the right wandering subspace of $K'$ with $Y'=[Y'\D]_2$ and $J(Y')Y'\subset L^{1}(\D)$.
 For simplicity, we set
 $$
 \begin{array}{rl}
 &K_r=\{x\in K':\;x D^{-\frac{1}{r}}\in L^p(\M)\},\\
 &Z_r=\{x\in Z':\;x D^{-\frac{1}{r}}\in L^p(\M)\},\\
 &Y_r=\{x\in Y':\;x D^{-\frac{1}{r}}\in L^p(\M)\},\\
  &X'=[Y'\A]_2\;\mbox{and}\;
 X_r=\{x\in X':\;x D^{-\frac{1}{r}}\in L^p(\M)\}.
 \end{array}
 $$
 Let $Z=[Z_rD^{-\frac{1}{r}}]_p$ and $Y=[Y_rD^{-\frac{1}{r}}]_p$. By Lemma \ref{invariant space} and Definition \ref{wanderingsubspace}, $Z$ is a right $\mathcal{A}$-invariant subspaces of $L^p(\M)$ and $Y$ is the right wandering subspace of $K$. We notice that $K=[[KD^{\frac{1}{r}}]_2D^{-\frac{1}{r}}\cap L^p(\M)]_p$ implies that $K=[K_rD^{-\frac{1}{r}}]_p$.

 Since $KD^{\frac{1}{r}}\subset K_r$, we get $[K_r]_2=K'$.  We use same method as in the proof of (iii) of Theorem \ref{invariant} to obtain that $Z'=[Z_r]_2$, $X'=[X_r]_2$ and \begin{equation}\label{eq:r-col}
 K_r=Z_r\oplus^{col}X_r.
\end{equation}
We have that
$$
[ZD^{\frac{1}{r}}]_2=[[Z_rD^{-\frac{1}{r}}]_pD^{\frac{1}{r}}]_2 =[Z_rD^{-\frac{1}{r}}D^{\frac{1}{r}}]_2=[Z_r]_2=Z'.
$$
Hence,
$$
[ZD^{\frac{1}{r}}\A_0]_2=[[ZD^{\frac{1}{r}}]_2\A_0]_2 =[Z'\A_0]_2=Z'=[ZD^{\frac{1}{r}}]_2,
$$
i.e.,  $Z$ is a type 2
 right $\mathcal{A}$-invariant subspace of $L^p(\M)$. By Lemma \ref{analytic}, we have that  $Y_r\D_a\subset Y_r$,
$$
 Y_rD^{-\frac{1}{r}}\subset Y_rD^{-\frac{1}{r}}\D_a= Y_r\D_aD^{-\frac{1}{r}}\subset Y_rD^{-\frac{1}{r}}
$$
 and $[Y_rD^{-\frac{1}{r}}\D_a]_p=[Y_rD^{-\frac{1}{r}}\D]_p$. Therefore,
it follows that
$$
 Y=[Y_rD^{-\frac{1}{r}}\D]_p=[[Y_rD^{-\frac{1}{r}}]_p\D]_p=[Y\D]_p.
$$
Since
$$
J(Y_rD^{-\frac{1}{r}})Y_rD^{-\frac{1}{r}}=D^{-\frac{1}{r}}J(Y_r)Y_rD^{-\frac{1}{r}}\subset D^{-\frac{1}{r}}L^1(\D)D^{-\frac{1}{r}}\subset L^{\frac{p}{2}}(\D),
$$
 we deduce that $J(Y)Y\subset L^{\frac{p}{2}}(\D)$.

Now we prove that $$
K=Z\oplus^{col} [Y\A]_p.
$$
By \cite[Theorem 2.8]{L2}, there are partial
isometries $u_i$ with mutually orthogonal ranges such that  $|u_i| \in\D$,
$$
X'= \oplus^{col}_i u_i H_2(\A)\quad\mbox{and}\quad Y'= \oplus^{col}_i u_i L_2(\D).
$$
Using  Lemma \ref{hp-spaces-carecterization}, we get that
$$
X_rD^{-\frac{1}{r}}=\oplus_{i}^{col}u_{i}(H^2(\A)D^{-\frac{1}{r}}\cap L^p(\M))
=\oplus_{i}^{col}u_{i}H^p(\mathcal{A}).
$$
and
$$
Y_rD^{-\frac{1}{r}}=\oplus_{i}^{col}u_{i}(L^2(\D)D^{-\frac{1}{r}}\cap L^p(\M))
=\oplus_{i}^{col}u_{i}L^p(\D).
$$
 So, it follows that  $[X_rD^{-\frac{1}{r}}]_p=[Y\A]_p$.

We claim that $K_rD^{-\frac{1}{r}}$ is closed. Indeed, if $x\in [K_rD^{-\frac{1}{r}}]_p$, then there is a sequence $(y_n)$ in $K_r$ such that $y_nD^{-\frac{1}{r}}\rightarrow x$ in norm in $L^p(\M)$. It follows that $y_n\rightarrow xD^{\frac{1}{r}}$ in norm in $L^2(\M)$. Set $y=xD^{\frac{1}{r}}$. It is clear that $y\in K_r$. Hence, $x=yD^{-\frac{1}{r}}\in K_rD^{-\frac{1}{r}}$, i.e., $K_rD^{-\frac{1}{r}}$ is closed. Similarly, we can prove that
$Z_rD^{-\frac{1}{r}}$ and $X_rD^{-\frac{1}{r}}$ are closed.
Thus
$$
K= K_rD^{-\frac{1}{r}},\quad Z= Z_rD^{-\frac{1}{r}}\quad \mbox{and}\quad [Y\A]_p= X_rD^{-\frac{1}{r}}.
$$
Applying \eqref{eq:r-col}, we obtain that $K=Z\oplus^{col} [Y\A]_p$.
The remainder of the proof can be done the same way as in the proof of Theorem \ref{invariant}.
\end{proof}

\begin{remark}
 Let $1\le p<\8$ and  $K$ be a right
 $\mathcal{A}$-invariant subspace of $L^{p}(\mathcal{M})$. In general, if $1\le p<2$ and $\frac{1}{p}-\frac{1}{r}=\frac{1}{2}$, then  $[K_r]_p\subset K$; if  $2<p<\8$ and $\frac{1}{p}+\frac{1}{r}=\frac{1}{2}$, then $K\subset[[KD^{\frac{1}{r}}]_2D^{-\frac{1}{r}}\cap L^p(\M)]_p$. It is unknown at the time of this writing whether for the general case,   the results in Theorem \ref{invariant} and \ref{invariant-2} are hold.
 \end{remark}

We use same method as in the proof of \cite[Proposition 2.4]{L2} to obtain
the following result,  we give its proof.

\begin{proposition}\label{cyclic and separating-L2}
Let $K$ is a right
 $\mathcal{A}$-invariant subspace of $L^2(\mathcal{M})$, and let $W$ be the right
wandering subspace of $K$. If $W$ has a cyclic and separating vector for the $\D$-action, then there is an isometry $u\in \mathcal{M}$ such that
$W=uL^2(\D)$.
\end{proposition}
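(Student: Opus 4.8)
The plan is to realize the desired isometry as the partial isometry in the $L^2(\M)$-polar decomposition of the given cyclic and separating vector, and to deduce from the separating property that this partial isometry is in fact an isometry. Fix a cyclic and separating vector $\xi_0\in W$ for the right $\D$-action, so that $[\xi_0\D]_2=W$ and $\xi_0 d\neq 0$ whenever $0\neq d\in\D$. By Theorem \ref{invariant} applied with $p=2$, $W$ is a right $\D$-module with $W=[W\D]_2$ and $J(W)W\subset L^1(\D)$; in particular $h:=\xi_0^*\xi_0\in L^1(\D)_+$.

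First I would show that the separating hypothesis forces $h$ to be faithful, i.e. its support projection $e=s(h)$ equals $1$; note $e\in\D$, since the ray $(0,\8)$ is invariant under scaling and hence $s(h)$ is $\widehat{\sigma}_t$-invariant. Indeed, if $e\neq 1$ then $d=1-e\in\D$ is nonzero and
\[
\|\xi_0 d\|_2^2=tr(d^*hd)=tr\big(h(1-e)\big)=tr(h)-tr(he)=0,
\]
because $he=h$, contradicting separation. Next, the $L^2(\M)$-polar decomposition $\xi_0=u|\xi_0|$ recalled in Section 2 has $u\in\M$ and $|\xi_0|=h^{1/2}\in L^2(\D)_+$, the square root remaining in the $\D$-part and scaling correctly under $\widehat{\sigma}_t$. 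Since $u^*u$ is the support of $|\xi_0|=h^{1/2}$, which equals $s(h)=1$, the operator $u$ is an isometry; moreover $\|ud\|_2=\|d\|_2$ for $d\in L^2(\D)$, so $uL^2(\D)$ is closed and $u[\,S\,]_2=[\,uS\,]_2$ for any set $S$.

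The key step is to prove that a faithful positive element of $L^2(\D)$ is cyclic for the right $\D$-action, namely $[h^{1/2}\D]_2=L^2(\D)$. Suppose $\zeta\in L^2(\D)$ is orthogonal to $h^{1/2}\D$; then $g:=h^{1/2}\zeta\in L^1(\D)$ satisfies $tr(d^*g)=0$ for all $d\in\D$, whence $g=0$ by faithfulness of the $L^1(\D)$–$\D$ duality. Cutting with the spectral projections $e_n=\chi_{[1/n,n]}(h)$ and the bounded operators $h^{-1/2}e_n$ gives $e_n\zeta=h^{-1/2}e_n(h^{1/2}\zeta)=0$ for every $n$; letting $e_n\uparrow s(h)=1$ yields $\zeta=0$. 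Thus $[h^{1/2}\D]_2=L^2(\D)$, and combining this with the cyclicity of $\xi_0$,
\[
uL^2(\D)=u[h^{1/2}\D]_2=[uh^{1/2}\D]_2=[\xi_0\D]_2=W,
\]
which is the assertion.

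I expect the main obstacle to be this last cyclicity step, where the unbounded modular structure of the Haagerup construction genuinely enters: one must justify, within the calculus of $\tau$-measurable operators, that $h^{1/2}\zeta=0$ forces $\zeta=0$ (injectivity of the faithful positive operator $h^{1/2}$) and that $h^{1/2}\in L^2(\D)$ with full support. The cutting argument with the projections $e_n\in\N$, which need not be $\widehat{\sigma}_t$-invariant, is designed precisely to reduce this to bounded manipulations in $\N$ together with a convergence in measure, after which the conclusion follows.
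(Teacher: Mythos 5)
Your argument is correct, but it takes a genuinely different route from the paper. The paper's proof invokes the Junge--Sherman result (cited as \cite{JS}, p.~13) that a right $L^2(\D)$-module with a cyclic and separating vector is isometrically $\D$-module isomorphic to $L^2(\D)$ itself; it then transports $D^{\frac12}$ through this isomorphism $\psi$, computes $\psi(D^{\frac12})^*\psi(D^{\frac12})=D$ by polarization from $tr(d^*h^*hd)=tr(d^*Dd)$, and reads the isometry off the polar decomposition $\psi(D^{\frac12})=uD^{\frac12}$ --- full support is automatic there because $D$ is invertible. You instead work directly with the given cyclic and separating vector $\xi_0$: separation forces $|\xi_0|=(\xi_0^*\xi_0)^{\frac12}$ to have full support, so the polar partial isometry is already an isometry, and you then supply the auxiliary fact that a full-support positive element of $L^2(\D)$ is right-cyclic, which replaces the surjectivity of $\psi$. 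This buys independence from the module-isomorphism machinery at the cost of two extra verifications, both of which you carry out correctly (the spectral cut-off $e_n=\chi_{[1/n,n]}(h)$ is a legitimate way to get injectivity of left multiplication by $h^{\frac12}$ on $\tau$-measurable operators). The one point I would tighten: to place $e=s(h)$ in $\D$ rather than merely in $\M$, you need not only the $\hat{\sigma}_t$-invariance you mention but also that $h=\xi_0^*\xi_0$ is affiliated with the copy of $\D\rtimes_{\sigma^\varphi}\real$ inside $\N$ (this is exactly what makes $L^1(\D)$ of Theorem \ref{invariant}(1) the Haagerup $L^1$-space of $\D$), so that $s(h)$ lies in that subalgebra and its $\hat{\sigma}_t$-fixed part, which is $\D$.
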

\begin{proof} By an adaption of an argument from \cite{JS} (see p.13) there exists
an isometric $\D$-module isomorphism $\psi: L^2(\D) \rightarrow W$. Let $h=\psi(D^{\frac{1}{2}})\in W$. Then
$$
tr(d^*h^*hd)=\|\psi(D^{\frac{1}{2}}d)\|^2_2= tr(d^*Dd),\qquad \forall d \in \D.
$$
By \cite[(5) of Theorem 2.3]{L2},
$h^*h\in L^1(\D)$, and so $h^*h = D$. Hence there exists an isometry $u$ with
initial projection $1$ such that $h = uD^{\frac{1}{2}}$. Since $\psi$ is $\D$-module map,
we have that
$$
\psi(D^{\frac{1}{2}}d)=\psi(D^{\frac{1}{2}})d = uD^{\frac{1}{2}}d,\qquad \forall d\in\D.
$$
Since $L^2(\D)=[D^{\frac{1}{2}}\D]$, it follows that  $\psi(L^2(\D))= uL^2(D)$. Thus $W=uL^2(\D)$ and $u^{\ast}u=1$.
\end{proof}
Similar to  Proposition \ref{cyclic and separating-L2}, we have the following result.
\begin{proposition}\label{cyclic and separating-L2-1}
Let $K$ is a left
 $\mathcal{A}$-invariant subspace of $L^2(\mathcal{M})$, and let $W$ be the left
wandering subspace of $K$. If $W$ has a cyclic and separating vector for the $\D$-action, then there is a partial isometry $v\in \mathcal{M}$ such that $vv^*=1$ and
$W=L^2(\D)v$.
\end{proposition}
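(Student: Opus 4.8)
The plan is to mirror the proof of Proposition~\ref{cyclic and separating-L2}, but with all one-sided structures reflected: $K$ is now a \emph{left} $\A$-module, so $W$ is a \emph{left} $L^2(\D)$-module and the relevant positivity will land us at $hh^*=D$ rather than $h^*h=D$. The most economical way to organize this is to pass to adjoints. Writing $\overline{\A}=J(\A)$, one checks from \eqref{maximal} that $\sigma^\varphi_t(J(\A))=J(\sigma^\varphi_t(\A))=J(\A)$ for all $t$ (the real modular group is a $\ast$-automorphism), so $J(\A)$ is again a maximal subdiagonal subalgebra of $\M$ with the \emph{same} diagonal $\D=J(\D)$ and conditional expectation $\E$. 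If $\A K\subset K$ then $J(K)J(\A)\subset J(K)$, i.e.\ $J(K)$ is a right $J(\A)$-invariant subspace of $L^2(\M)$; and since $J$ is a conjugate-linear isometric involution commuting with closures, it carries the left wandering subspace $W=K\ominus[\A_0K]_2$ onto the right wandering subspace $J(W)=J(K)\ominus[J(K)J(\A_0)]_2$ of $J(K)$. A cyclic and separating vector $\xi$ for the left $\D$-action on $W$ is sent by $J$ to $\xi^*$, which (using $\D^*=\D$) is cyclic and separating for the right $\D$-action on $J(W)$.

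Granting this, I would simply invoke Proposition~\ref{cyclic and separating-L2} for the maximal subdiagonal algebra $J(\A)$: it yields an isometry $u\in\M$, $u^*u=1$, with $J(W)=uL^2(\D)$. Taking adjoints and using $L^2(\D)^*=L^2(\D)$ gives $W=J(uL^2(\D))=L^2(\D)u^*$. Setting $v=u^*$ we get $vv^*=u^*u=1$ and $W=L^2(\D)v$, which is the assertion.

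Alternatively, to run the argument directly on the left: by the left analogue of \cite{JS} there is an isometric left $\D$-module isomorphism $\psi\colon L^2(\D)\to W$; put $h=\psi(D^{\frac12})$. For $d\in\D$, $\psi(dD^{\frac12})=dh$, so the isometry of $\psi$ gives $tr(d^*d\,hh^*)=tr(d^*d\,D)$ for every $d\in\D$. Since $WJ(W)\subset L^1(\D)$ (the left form of Theorem~\ref{invariant}(1)) we have $hh^*-D\in L^1(\D)$, and as the elements $d^*d$ span $\D$ the faithful pairing forces $hh^*=D$. Then $|h^*|=(hh^*)^{\frac12}=D^{\frac12}$, whose support is $1$ because $\varphi$ is faithful; the polar decomposition $h^*=wD^{\frac12}$ thus has $w^*w=1$, so $h=D^{\frac12}v$ with $v=w^*$ and $vv^*=1$. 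Finally $\psi(dD^{\frac12})=dD^{\frac12}v$ together with $L^2(\D)=[\D D^{\frac12}]$ and the fact that right multiplication by $v$ is an $L^2$-isometry (because $vv^*=1$) give $W=\psi(L^2(\D))=L^2(\D)v$.

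The routine parts here—the trace identity, the polar decomposition, the closure computation—are immediate once the \emph{left} version of the underlying module theory is available, and that is exactly the step requiring care and the main obstacle: the paper establishes Theorem~\ref{invariant} and Proposition~\ref{cyclic and separating-L2} only for right $\A$-invariant subspaces, whereas here I need the left $L^2(\D)$-module structure on $W$ and the inclusion $WJ(W)\subset L^1(\D)$. I would supply these not by redoing \cite{JS} and \cite{L2} on the left, but by the reduction of the first paragraph, which makes the left statement for $\A$ literally the right statement for the conjugate maximal subdiagonal algebra $J(\A)$; verifying that $J(\A)$ satisfies all three subdiagonality axioms and \eqref{maximal} is the honest content hidden behind the phrase ``similar to the above Proposition''.
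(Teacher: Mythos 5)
Your proposal is correct. The paper gives no argument here beyond the phrase ``Similar to the above Proposition,'' and its intended proof is exactly your second, direct route: mirror the proof of Proposition \ref{cyclic and separating-L2} on the left, arriving at $hh^*=D$ in place of $h^*h=D$ and hence $h=D^{\frac12}v$ with $vv^*=1$. The steps you record all check out: the identity $tr(d^*d\,hh^*)=tr(d^*d\,D)$ follows from traciality of $tr$, the inclusion $WJ(W)\subset L^1(\D)$ is the left form of Theorem \ref{invariant}(1), invertibility of $D$ forces the partial isometry in the polar decomposition of $h^*$ to be an isometry, and right multiplication by $v$ is an $L^2$-isometry precisely because $vv^*=1$. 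What you add beyond the paper is the reduction via the involution $J$: since $\sigma_t^\varphi$ is a $\ast$-automorphism, \eqref{maximal} gives $\sigma_t^\varphi(J(\A))=J(\A)$, and one checks directly that $J(\A)$ satisfies the three subdiagonality axioms with the same diagonal $\D$ and expectation $\E$ (using that $\E$ is a $\ast$-map), that $(J(\A))_0=J(\A_0)$, that $J$ is anti-unitary on $L^2(\M)$ and hence carries $K\ominus[\A_0K]_2$ onto $J(K)\ominus[J(K)J(\A_0)]_2$, and that cyclic and separating vectors for the left $\D$-action pass to cyclic and separating vectors for the right action. This makes the statement literally an instance of Proposition \ref{cyclic and separating-L2} applied to $J(\A)$ and $J(K)$, and it is arguably the cleaner way to discharge the paper's unproved claim of ``similarity,'' since it avoids redoing the left-handed versions of the module theory of \cite{JS} and of Theorem \ref{invariant}. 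Either of your two routes would serve as a complete proof; the only content genuinely requiring verification is the one you flag, namely that $J(\A)$ is again a maximal subdiagonal algebra (for the first route) or that the left analogues of the cited right-module results hold (for the second).
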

\section{ Outer operators of $H^p(\A)$}

In the case when von Neumann algebra $\M$ is finite, from the Beurling-Blecher-Labuschagne theorem follows a generalized `inner-outer' factorization.
Let $x\in L^p(\M)\;(1\le p\le\8)$ and $K = [x\A]_p$. If the
right-wandering subspace of $K$ (respectively right-wandering quotient of
$K$) has a nonzero separating and cyclic vector for the right action of $\D$,  then $x$ is of the
form $x = uh$ for some some outer operator  $h\in H^p(\A)$ and a unitary $u\in \M $ (see the lines before the Closing
remark of \cite{BL3}). For more details on  outer operators  we refer to
\cite{BX,BL1,BL2}.

In this section, we consider outer operators in the case that $\mathcal{M}$ is  a $\sigma$-finite von Neumann algebra. Similar to the finite case, we define the outer operators as following.

\begin{definition}
 Let $0<p\le\8$. An operator $h\in H^p(\A)$ is called {\it  a left  outer operator},
{\it a right  outer operator} or {\it a bilaterally  outer operator} according to
$[h\A]_p=H^p(\A)$, $[\A h]_p=H^p(\A)$ or $[\A h\A]_p=H^p(\A)$.
 \end{definition}

  \begin{proposition}\label{prop:bilaterally}
 Let $1\le p<\8$, and let  $h\in H^p(\A)$. The following are equivalent:
 \begin{enumerate}[\rm(i)]
   \item $h$ is a  bilaterally  outer operator;
   \item $\mathcal{E}(h)$ is a  bilaterally  outer operator in $L^p(\mathcal{D})$ and $[\A h\A_0]_p = [\A_0h\A]_p = H^p_0(\mathcal{A})$;
 \item $\mathcal{E}(h)$ is a  bilaterally  outer operator in $L^p(\mathcal{D})$ and $\mathcal{E}(h)-h \in [\A h\A_0]_p = [\A_0h\A]_p $.
 \end{enumerate}
  \end{proposition}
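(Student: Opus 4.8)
The plan is to establish the cyclic chain $(1)\Rightarrow(2)\Rightarrow(3)\Rightarrow(1)$. Write $V=[\A h\A]_p$. Recall that ``bilaterally outer'' means $V=H^p(\A)$, while bilateral outerness of $\E(h)$ in $L^p(\D)$ means $[\D\E(h)\D]_p=L^p(\D)$ (here $\D$ is its own diagonal, so $H^p(\D)=L^p(\D)$). Before starting I would record three structural facts. First, $\E(ahb)=\E(a)\E(h)\E(b)$ for $a,b\in\A$, and $\E$ extends to a contractive projection of $L^p(\M)$ onto $L^p(\D)$ whose restriction to $H^p(\A)$ has kernel exactly $H^p_0(\A)$; in particular $\E(H^p(\A))=L^p(\D)$ and $\E(\eta)-\eta\in H^p_0(\A)$ for every $\eta\in H^p(\A)$. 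Second, the one-sided absorption $\A\A_0\subseteq\A_0$ and $\A_0\A\subseteq\A_0$, immediate from $\E(aa_0)=\E(a)\E(a_0)=0$. Third, the module identities $[\A_0 H^p(\A)]_p=H^p_0(\A)=[H^p(\A)\A_0]_p$, which follow from the second fact together with the equalities $H^p(\A)=[D^{\frac1p}\A]_p=[\A D^{\frac1p}]_p$ and $H^p_0(\A)=[D^{\frac1p}\A_0]_p=[\A_0 D^{\frac1p}]_p$ obtained from \eqref{equalityHp} at $\eta=0,1$.

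For $(1)\Rightarrow(2)$: applying the contractive projection $\E$ to $V=H^p(\A)$ and using continuity and multiplicativity gives $L^p(\D)=\E(H^p(\A))\subseteq[\E(\A h\A)]_p\subseteq[\D\E(h)\D]_p\subseteq L^p(\D)$, so $[\D\E(h)\D]_p=L^p(\D)$, i.e. $\E(h)$ is bilaterally outer in $L^p(\D)$. For the two-sided equalities I would compute, using continuity of one-sided multiplication and the facts above, $[\A h\A_0]_p=[[\A h\A]_p\A_0]_p=[H^p(\A)\A_0]_p=H^p_0(\A)$, and symmetrically $[\A_0 h\A]_p=[\A_0[\A h\A]_p]_p=[\A_0 H^p(\A)]_p=H^p_0(\A)$; in each line the first equality absorbs the middle copy of $\A$ via $\A\A_0\subseteq\A_0$ (resp. $\A_0\A\subseteq\A_0$).

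The implication $(2)\Rightarrow(3)$ is immediate: (2) already contains the equality $[\A h\A_0]_p=[\A_0 h\A]_p=H^p_0(\A)$, and since $\E(\E(h)-h)=0$ with $\E(h)-h\in H^p(\A)$, the first structural fact gives $\E(h)-h\in H^p_0(\A)=[\A h\A_0]_p$. For $(3)\Rightarrow(1)$, note that $V$ is a closed two-sided $\A$-module containing $h$; since $\E(h)-h\in[\A h\A_0]_p\subseteq V$ we get $\E(h)\in V$, hence $\D\E(h)\D\subseteq V$ and $[\D\E(h)\D]_p\subseteq V$. Bilateral outerness of $\E(h)$ then forces $L^p(\D)\subseteq V$ (the $L^p(\D)$- and $L^p(\M)$-closures of $\D\E(h)\D$ coincide because $L^p(\D)$, being the range of the contractive projection $\E$, is closed in $L^p(\M)$). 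In particular $D^{\frac1p}\in V$, so $\A D^{\frac1p}\subseteq\A V\subseteq V$ and $H^p(\A)=[\A D^{\frac1p}]_p\subseteq V\subseteq H^p(\A)$, giving $V=H^p(\A)$.

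The step that looks like the main obstacle is the reverse inclusion $H^p_0(\A)\subseteq[\A h\A_0]_p$ in $(1)\Rightarrow(2)$, which one is tempted to attack through a Hahn--Banach/annihilator argument as elsewhere in the paper. The plan deliberately avoids this: the middle $\A$ in $[\A h\A]_p=H^p(\A)$ is absorbed into the trailing $\A_0$ using $\A\A_0\subseteq\A_0$, reducing the claim to the $h$-free module identity $[H^p(\A)\A_0]_p=H^p_0(\A)$. Consequently the only points needing genuine verification are the module identities of the third structural fact and the agreement of the two closures of $L^p(\D)$, both of which are routine consequences of \eqref{equalityHp} and the contractivity of $\E$ on $L^p(\M)$.
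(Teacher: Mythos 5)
Your argument is correct and follows essentially the same route as the paper: the cyclic chain $(1)\Rightarrow(2)\Rightarrow(3)\Rightarrow(1)$ driven by the multiplicativity and contractivity of $\E$, the identities \eqref{equalityHp} and \eqref{equalityLp(D)}, and the observation that $D^{\frac1p}\in[\A h\A]_p$ forces $[\A h\A]_p=H^p(\A)$. The only (cosmetic) difference is in $(1)\Rightarrow(2)$, where the paper extracts an approximating sequence $a_nhb_n\to D^{\frac1p}$ while you absorb the middle copy of $\A$ via $\A\A_0\subseteq\A_0$ and reduce to the module identity $[H^p(\A)\A_0]_p=H^p_0(\A)$; both are valid and rest on the same facts.
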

\begin{proof} (i) $\Rightarrow$ (ii). If $h$ is a bilaterally  outer operator,  then for $D^\frac{1}{p}$ there exist two sequence $(a_n),\; (b_n)\subset\A$ such that
\begin{equation}\label{eq:p-convergence}
\|a_nhb_n-D^\frac{1}{p}\|_p \rightarrow 0\; \mbox{as}\;n\rightarrow\8.
\end{equation}
 By continuity of $\E$, we get
 $$
 \|\E(a_n)\E(h)\E(b_n)-D^\frac{1}{p}\|_p\rightarrow 0\quad\mbox{as}\quad n\rightarrow\8.
 $$
  Hence, by \eqref{equalityLp(D)}, we have that
$$
L^{p}(\D)=[D^\frac{1}{p}\D]_p\subset[\D \E(h)\D]_p\subset L^{p}(\D).
$$
So, $\mathcal{E}(h)$ is a  bilaterally  outer operator in $L^p(\mathcal{D})$. Using \eqref{equalityHp} and \eqref{eq:p-convergence}, we deduce that
$$
[\A h\A_0]_p = [\A_0h\A]_p = H^p_0(\mathcal{A}).
$$

(ii) $\Rightarrow$ (iii) is trivial.

(iii) $\Rightarrow$ (i). It is clear that
$$
D^\frac{1}{p}\in[\D \E(h)\D]_p\subset[\A \E(h)\A]_p
$$
and $h\in [\A h\A]_p$.
Hence, $\E(h)=(\E(h)-h)+h\in [\A h\A]_p$. It follows that
$D^\frac{1}{p}\in[\A h\A]_p$. By \eqref{equalityHp},
we obtain that $H^p(\A)=[\A h\A]_p$.
\end{proof}

Similar to Proposition \ref{prop:bilaterally}, we have the following result.

\begin{proposition}\label{prop:left-right}
 Let $1\le p<\8$, and let  $h\in H^p(\A)$. The following are equivalent:
 \begin{enumerate}[\rm(i)]
   \item $h$ is a left  outer operator (resp. a right  outer operator);
   \item $\mathcal{E}(h)$ is a  left  outer operator (resp. a right  outer operator) in $L^p(\mathcal{D})$ and $[h\A_0]_p = H^p_0(\mathcal{A})$ (resp. $[\A_0 h]_p= H^p_0(\mathcal{A})$);
 \item  $\mathcal{E}(h)$ is a left  outer operator (resp., a right  outer operator) in $L^p(\mathcal{D})$ and $\mathcal{E}(h)-h \in [h\A_0]_p$ (resp. $\mathcal{E}(h)-h \in [\A_0 h]_p$).
 \end{enumerate}
  \end{proposition}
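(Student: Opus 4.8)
My plan is to mirror the proof of Proposition~\ref{prop:bilaterally}, treating only the left outer case in detail; the right outer case then follows by the evident left--right symmetry (reverse the order of every product and replace $[h\A]_p,[h\A_0]_p$ by $[\A h]_p,[\A_0 h]_p$). I would run the cycle (i)$\Rightarrow$(ii)$\Rightarrow$(iii)$\Rightarrow$(i).

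For (i)$\Rightarrow$(ii): assuming $[h\A]_p=H^p(\A)$ and using $D^{\frac1p}\in H^p(\A)$, I choose $(b_n)\subset\A$ with $\|hb_n-D^{\frac1p}\|_p\to0$. Applying the contractive projection $\E$, the multiplicativity $\E(hb_n)=\E(h)\E(b_n)$ (valid since $h\in H^p(\A)$, $b_n\in\A$), and $\E(D^{\frac1p})=D^{\frac1p}$, I get $\|\E(h)\E(b_n)-D^{\frac1p}\|_p\to0$ with $\E(b_n)\in\D$, so $D^{\frac1p}\in[\E(h)\D]_p$. By \eqref{equalityLp(D)} with $\eta=0$ one has $L^p(\D)=[D^{\frac1p}\D]_p$, hence $L^p(\D)\subseteq[\E(h)\D]_p\subseteq L^p(\D)$, i.e.\ $\E(h)$ is left outer in $L^p(\D)$. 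To get $[h\A_0]_p=H^p_0(\A)$ I argue both inclusions: $[h\A_0]_p\subseteq H^p_0(\A)$ since $\E(ha_0)=\E(h)\E(a_0)=0$ for $a_0\in\A_0$, so $h\A_0\subseteq\ker\E\cap H^p(\A)=H^p_0(\A)$; and for the reverse inclusion I right-multiply the relation $hb_n\to D^{\frac1p}$ by an arbitrary $b_0\in\A_0$. Because right multiplication by $b_0$ is contractive on $L^p(\M)$ and $\A_0$ is a right ideal of $\A$ (so $b_nb_0\in\A_0$), I obtain $hb_nb_0\to D^{\frac1p}b_0$ with $hb_nb_0\in h\A_0$, whence $D^{\frac1p}\A_0\subseteq[h\A_0]_p$ and therefore $H^p_0(\A)=[D^{\frac1p}\A_0]_p\subseteq[h\A_0]_p$ (using \eqref{equalityHp} with $\eta=0$).

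The implication (ii)$\Rightarrow$(iii) is immediate, since $\E(\E(h)-h)=\E(h)-\E(h)=0$ gives $\E(h)-h\in H^p_0(\A)$, and by (ii) this space equals $[h\A_0]_p$. For (iii)$\Rightarrow$(i): from $\E(h)-h\in[h\A_0]_p\subseteq[h\A]_p$ and $h=h\cdot1\in[h\A]_p$ I conclude $\E(h)=h+(\E(h)-h)\in[h\A]_p$. Since $[h\A]_p$ is a right $\A$-module (for $z=\lim hc_n$ and $a\in\A$ one has $za=\lim h(c_na)\in[h\A]_p$) and $\D\subseteq\A$, I get $[\E(h)\D]_p\subseteq[h\A]_p$; but $\E(h)$ being left outer in $L^p(\D)$ means $[\E(h)\D]_p=L^p(\D)$, so $L^p(\D)\subseteq[h\A]_p$ and in particular $D^{\frac1p}\in[h\A]_p$. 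Then $H^p(\A)=[D^{\frac1p}\A]_p\subseteq[h\A]_p$ by \eqref{equalityHp}, while $[h\A]_p\subseteq H^p(\A)$ trivially, so $h$ is left outer.

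The step I expect to require the most care is the one-sided identity $[h\A_0]_p=H^p_0(\A)$ in (i)$\Rightarrow$(ii) --- the analogue of the place where the bilateral proof invokes \eqref{equalityHp} and \eqref{eq:p-convergence}. Unlike the two-sided situation, only one factor is free, so the delicate point is to verify that multiplying the single approximating relation $hb_n\to D^{\frac1p}$ on the correct side by elements of the ideal $\A_0$ keeps the approximants inside $h\A_0$, and that the resulting limits $D^{\frac1p}\A_0$ are $L^p$-dense in $H^p_0(\A)$. The remaining ingredients --- contractivity of one-sided multiplication on $L^p(\M)$, the ideal property $\A\A_0,\A_0\A\subseteq\A_0$ (from multiplicativity of $\E$ on $\A$), and the standard identity $H^p_0(\A)=\ker\E\cap H^p(\A)$ --- are routine.
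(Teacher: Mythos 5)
Your proposal is correct and follows essentially the same route as the paper: the paper gives no separate argument for Proposition~\ref{prop:left-right} but states that it is proved "similar to Proposition~\ref{prop:bilaterally}", and your cycle (i)$\Rightarrow$(ii)$\Rightarrow$(iii)$\Rightarrow$(i) is exactly that adaptation, with the one-sided identity $[h\A_0]_p=H^p_0(\A)$ handled by the same device (push the approximating relation $hb_n\to D^{1/p}$ through the ideal $\A_0$ and invoke \eqref{equalityHp}). The only cosmetic slip is calling $\A_0$ a "right ideal" where you use $\A\A_0\subseteq\A_0$; since $\A_0$ is a two-sided ideal of $\A$ this does not affect the argument.
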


\begin{proposition}\label{prop:left-right-outer-lp}
 Let $1\le p<\8$. If  $h\in H^p(\A)$  is a left  outer operator (resp. a right  outer operator), then
 $\mathcal{E}(h)$ and $h$ are  left  outer operator (resp. a right  outer operator) in $L^p(\M)$.
  \end{proposition}
 \begin{proof} Let $h\in H^p(\A)$  be a left  outer operator. From Proposition \ref{prop:left-right} it follows that $[\E(h)\D]_p=L^p(\D)$.
Since $D^{\frac{1}{p}}\in L^p(\D)=[\E(h)\D]_p $, there is a sequence $(d_n)$ in $\D$ such that $\E(h)d_n\rightarrow D^{\frac{1}{p}}$ in norm in $L^p(\M)$. Therefore, $[\E(h)\M]_p=L^p(\M)$.

 Notice that $\E(h)\in H^p(\A)=[h\A]_p$. It follows that there is a sequence $(a_n)$ in $\A$ such that $ha_n\rightarrow \E(h)$, and so $[h\M]_p=L^p(\M)$. The
alternative claim follows analogously.
 \end{proof}

We will keep all previous notations throughout this section. If $h$  is a left  outer operator and it is also
a right  outer operator, then  we  call $h$ is {\it an outer operator}.

\begin{lemma}\label{outer-polar}
Let $0< p<\8$.
\begin{enumerate}[\rm(i)]
\item If $ h\in H^p(\A)$ is  an outer operator in $H^p(\mathcal{A})$ and $h=u|h|$ is the polar decomposition of $h$, then $u$ is a unitary.
\item  If $d\in L^p(\D)$ is  an outer operator in $L^p(\D)$ and $d=v|d|$ is the polar decomposition of $d$, then $v$ is a unitary in $\D$.
\end{enumerate}
\end{lemma}

\begin{proof} (i) Since $h$ is a left  outer operator,  there exists a sequence $(a_n)\subset\A$
such that $ha_n\rightarrow D^\frac{1}{p}$ in norm in $L^p(\M)$. Let $l(h)$ be the left support projection of $h$.
Then $l(h)^\bot ha_n\rightarrow l(h)^\bot D^\frac{1}{p}$ in norm in $L^p(\M)$. On the other hand, $l(h)^\bot ha_n = 0$ for all $n$, and so
 $l(h)^\bot D^\frac{1}{p}=0$. Since $D^\frac{1}{p}$ is invertible, $l(h)^\bot=0$.  Hence, $h$ must have dense range, i.e., $uu^*=l(h)=1$. Similarly, from the fact that $h$ is a right  outer operator, we obtain that $u^*u=r(h)=1$, where $r(h)$ is the right support projection of $h$. Thus $u$ is a unitary.

 (ii) The proof is similar to the proof of (i).
\end{proof}
\begin{theorem}\label{outer property-Lp(D)}
 Let $1\le p<\8$, and let  $d\in L^p(\D)$. The following are equivalent:
 \begin{enumerate}[\rm(i)]
   \item $d$ is an  outer operator in $L^p(\D)$;
    \item $d$ is an  outer operator in $H^p(\A)$;
   \item The left and right
support projections of $d$ are 1;
 \item   $d$  is an  outer operator  in $L^p(\M)$.
 \end{enumerate}
\end{theorem}
\begin{proof} (i) $\Rightarrow$ (ii)  Since $D^\frac{1}{p}\in L^p(\D)=[d\D]_p=[\D d]_p$,  there are sequence $(a_n)$ and $(b_n)$  in $\D\subset\A$ such that $da_n\rightarrow D^\frac{1}{p}$
and $b_n d\rightarrow D^\frac{1}{p}$. Hence, $[d\A]_p=[\A d]_p=H^p(\A)$.

(ii) $\Rightarrow$ (iii) is follows from the proof of Lemma \ref{outer-polar}.

(iii) $\Rightarrow$ (iv). First we  prove $d$ is a left outer operator in $L^p(\M)$.  Let $p'$ be the conjugate index of $p$. If $x\in L^{p'}(\M)$ such that  $tr(xdz)=$ for all $z\in\M$, then  $xd=0$. Hence,
$x=xdd^{-1}=0$, and so $[d\M]_p=L^p(\M)$.  Using the same method, we can prove that $d$ is a right outer operator in $L^p(\M)$.

(iv) $\Rightarrow$ (i). Since $D^{\frac{1}{p}}\in [d\M]_p =[\M d]_p$, there are  sequences $(a_n)$ and $(b_n)$ in $\M$ such that $da_n\rightarrow D^{\frac{1}{p}}$ and $b_n d\rightarrow D^{\frac{1}{p}}$ in norm in $L^p(\M)$.  Using the continuity of $\E$,  we obtain that $d\E(a_n)\rightarrow D^{\frac{1}{p}}$ and $\E(b_n) d\rightarrow D^{\frac{1}{p}}$ in norm in $L^p(\D)$. Hence, we get the desired result.
\end{proof}
\begin{corollary}\label{cor:outer property-Lp(D)}
 Let $1\le p<\8$ and $0<r<\8$. If  $d\in L^p(\D)$ is an  outer operator and $rp\ge1$, then
    $|d|^\frac{1}{r}\in  L^{pr}(\D)$  is an  outer operator.
\end{corollary}
\begin{proof}
It is clear that  $|d|\in L^p(\D)$ is an  outer operator. Hence, by Theorem \ref{outer property-Lp(D)}, $|d|^\frac{1}{r}$ is an outer operator.
\end{proof}
\begin{corollary}\label{cor:outer property-Hp(A)}
 Let $1\le p<\8$ and  $d\in L^1(\D)^+$ be an  outer operator. If $0\le \eta\le1$, then
$$
H^{p}(\mathcal{A})=[d^{\frac{1-\eta}{p}}\A d^{\frac{\eta}{p}}]_p,\quad H^{p}_0(\mathcal{A})=[d^{\frac{1-\eta}{p}}\A_0 d^{\frac{\eta}{p}}]_p,\quad
L^{p}(\D)=[d^{\frac{1-\eta}{p}}\D d^{\frac{\eta}{p}}]_p
$$
and $L^{p}(\M)=[d^{\frac{1-\eta}{p}}\M d^{\frac{\eta}{p}}]_p$.
\end{corollary}

\begin{lemma}\label{outer property-diagnol}
 Let $1\le p<\8$, $1\le q,r<\8$ and $\frac{1}{p}-\frac{1}{r}=\frac{1}{q}$. If  $ d\in L^p(\D)$ is outer and $dD^{-\frac{1}{r}},\;D^{-\frac{1}{r}}d\in L^{q}(\M)$, then $dD^{-\frac{1}{r}},\;D^{-\frac{1}{r}}d\in L^{q}(\D)$  are outer operators.
\end{lemma}
\begin{proof} Since $dD^{-\frac{1}{r}}\in H^p(\A)D^{-\frac{1}{r}}\cap L^q(\M)$ and $dD^{-\frac{1}{r}}\in J(H^p(\A))D^{-\frac{1}{r}}\cap L^q(\M)$, by Lemma \ref{hp-spaces-carecterization}, we get $dD^{-\frac{1}{r}}\in H^q(\A)\cap J(H^q(\M)= L^q(\D)$. Similarly, $D^{-\frac{1}{r}}d\in L^{q}(\D)$. Using Theorem \ref{outer property-Lp(D)}, we obtain the desired result.
\end{proof}

\begin{lemma}\label{outer property}
 Let $1\le p<\8$, $1\le q,r<\8$ and $\frac{1}{p}+\frac{1}{r}=\frac{1}{q}$.
 \begin{enumerate}[\rm(i)]
   \item  If $ h\in H^p(\mathcal{A})$ is an outer operator, then
$hD^\frac{1}{r}$  and $D^{\frac{1}{r}}h\in H^{q}(\A)$ are  outer operators.
   \item  If $ d\in L^p(\D)$ is an outer operator, then
$dD^\frac{1}{r},\;D^{\frac{1}{r}}d\in L^{q}(\D)$ are  outer operators.
 \end{enumerate}
\end{lemma}
\begin{proof} (i) We only prove $hD^{\frac{1}{r}}$ is an  outer operator. A similar argument works for $D^{\frac{1}{r}}h$.  By \eqref{Hp-multiplication-Hr}, $[H^p(\A)D^{\frac{1}{r}}]_q=H^q(\A)$.
 We use same method as in the proof of (3) of Lemma \ref{analytic} to obtain that $[h\A_a]_p=[\A_ah]_p=H^p(\A)$. Hence, $[[h\A_a]_pD^{\frac{1}{r}}]_q=H^q(\A)$. Using Lemma \ref{analytic}, we get
 $$
 H^q(\A)=[[h\A_a]_pD^{\frac{1}{r}}]_q=[h\A_aD^{\frac{1}{r}}]_q=[hD^{\frac{1}{r}}\A_a]_q\subset[hD^{\frac{1}{r}}\A]_q\subset H^q(\A).
 $$
Thus $hD^{\frac{1}{r}}$ is a left outer operator. Similarly we can show $hD^{\frac{1}{r}}$ is a right outer operator.

(ii)  follows analogously.
\end{proof}

\begin{proposition}\label{condition outer} Let $1\le p<\8$ and    $h\in H^p(\A)$. Suppose that $\E(h)$ is an outer operator in $L^p(\D)$ and one of the the following conditions holds.
 \begin{enumerate}[\rm(i)]
   \item   $\frac{1}{p}-\frac{1}{r}=\frac{1}{2}\;(r>2)$ and  $hD^{-\frac{1}{r}},\;D^{-\frac{1}{r}}h\in L^{2}(\M)$;
   \item $\frac{1}{p}+\frac{1}{r}=\frac{1}{2}\;(r>2)$.
 \end{enumerate}
Then there is a left outer operator $g\in H^p(\A)$ and an isometry $u\in\A$ such that $h = ug$ (resp. there is a right outer operator $g'\in H^p(\A)$ and  $v\in\A$ such that $vv^*=1$ and $h=g'v$).
\end{proposition}

\begin{proof} First assume that condition (i) holds. By Lemma \ref{hp-spaces-carecterization}, we get $hD^{-\frac{1}{r}}\in H^{2}(\A)$.
Let $p'$ be the conjugate index of $p$.  Then for any $d\in\D$, we have that
$$
tr(\E(h)D^{-\frac{1}{r}}D^{\frac{1}{2}}d)=tr(\E(h)D^{\frac{1}{p'}}d)=tr(\E(hD^{\frac{1}{p'}}d))=tr(\E(hD^{-\frac{1}{r}})D^{\frac{1}{2}}d).
$$
By \eqref{equalityLp(D)}, we get
$$
tr(\E(h)D^{-\frac{1}{r}}f)=tr(\E(hD^{-\frac{1}{r}})f),\qquad \forall f\in L^2(\D).
$$
Hence, $\E(hD^{-\frac{1}{r}})=\E(h)D^{-\frac{1}{r}}$.  On the other hand, by Lemma \ref{outer property-diagnol}, $\E(h)D^{-\frac{1}{r}}$ is an outer operator in $L^2(\D)$.

We consider the orthogonal projection
$$
P:\;[hD^{-\frac{1}{r}}\A]_2\;\rightarrow\;[\E(hD^{-\frac{1}{r}})\D]_2.
$$
Then $P=\E|_{[hD^{-\frac{1}{r}}\A]_2}$ and $[\E(hD^{-\frac{1}{r}})\D]=[hD^{-\frac{1}{r}}\A]_2\ominus[hD^{-\frac{1}{r}}\A_0]_2$.
It follows that $\E(hD^{-\frac{1}{r}})$ is a cyclic separating vector for the wandering subspace $[\E(hD^{-\frac{1}{r}})\D]_2$ of $[hD^{-\frac{1}{r}}\A]_2$.
By Proposition \ref{cyclic and separating-L2}, there exists an isometry $u\in\M$ such that
$$
[hD^{-\frac{1}{r}}\A]_2=uH^2(\A).
$$
 We may write $hD^{-\frac{1}{r}}=uf$, for $f\in H^2(\A)$. Then
$$
[f\A]_2=u^*u[f\A]_2=u^*[hD^{-\frac{1}{r}}\A]_2=u^*uH^2(\A)=H^2(\A),
$$
i.e., $f$ is a left outer operator. On the other hand,
$$
0=tr(hD^{-\frac{1}{r}}aD^\frac{1}{2}b)=tr(u(faD^\frac{1}{2}b)),\qquad \forall a\in\A_0,\quad\forall b\in\A.
$$
Since $f$ is a left outer operator, by Proposition \ref{prop:left-right}, $[f\A_0]_2=H^2_0(\A)$. Hence, using \eqref{equalityHp} we obtain that $[f\A_0D^\frac{1}{2}\A]_1=H_0^1(\A)$.
It follows that $0=tr(ua)$ for any $a\in H_0^1(\A)$. By \eqref{charecterization A}, $u\in A$.  Let $g=fD^{\frac{1}{r}}$.  From the proof of Lemma \ref{outer property}, we know that $g$ is a left outer operator. This gives the desired result. Similarly, we  prove the
alternative claim.

If condition (ii) holds. The proof is similar to the above.
\end{proof}

\begin{lemma}\label{lem:equality} If $x\in L^2(\M)$ and  $u\in\M$ is a contraction such that $\|ux\|_2=\|x\|_2$, then $x=u^*ux$.
\end{lemma}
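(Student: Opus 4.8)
The plan is to work inside the Hilbert space $L^2(\M)$, whose inner product is $\langle a,b\rangle = tr(a^*b)$, so that in particular $\|x\|_2^2 = tr(x^*x)$ and $\|ux\|_2^2 = tr(x^*u^*ux)$. The first step is to exploit the contractivity of $u$: since $\|u\|\le 1$ we have $u^*u\le 1$ in $\M$, so $1-u^*u$ is a positive element of $\M$ and admits a positive square root $c=(1-u^*u)^{1/2}\in\M$ with $c=c^*$ and $c^2=1-u^*u$. Because $c\in\M=L^\infty(\M)$ acts as a bounded left multiplier on $L^2(\M)$ (Hölder's inequality from Section 2), the element $cx$ lies in $L^2(\M)$ and all the traces below are well defined.

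The second step is to compute the norm defect. Using $c^2=1-u^*u$ and $c=c^*$, I would write
\[
\|x\|_2^2-\|ux\|_2^2 = tr\big(x^*(1-u^*u)x\big)= tr\big((cx)^*(cx)\big)=\|cx\|_2^2 .
\]
The hypothesis $\|ux\|_2=\|x\|_2$ forces the left-hand side to vanish, hence $\|cx\|_2=0$ and therefore $cx=0$ in $L^2(\M)$.

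The final step is immediate: multiplying $cx=0$ on the left by $c\in\M$ yields $c^2x=0$, that is $(1-u^*u)x=0$, which is exactly the asserted identity $x=u^*ux$.

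I do not anticipate any genuine obstacle here; the argument is the standard Hilbert-space "equality-in-contraction" computation. The only point needing a little care is purely formal, namely the well-definedness of the products $cx$, $x^*(1-u^*u)x$ and of their traces in the Haagerup $L^2$ framework. This is supplied by the Hölder inequality and the trace duality recalled in Section 2, together with the fact that $c$ belongs to $\M=L^\infty(\M)$ and hence acts boundedly on $L^2(\M)$.
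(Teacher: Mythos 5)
Your proof is correct and follows essentially the same route as the paper's: both reduce the hypothesis to $\|(1-u^*u)^{1/2}x\|_2^2=tr\bigl(x^*(1-u^*u)x\bigr)=0$ and then multiply by $(1-u^*u)^{1/2}$ again to get $(1-u^*u)x=0$. The only cosmetic difference is that the paper first records $x^*x=x^*u^*ux$ via the positivity $x^*u^*ux\le x^*x$ and the identity $\|y\|_1=tr(y)$ for $y\ge 0$, whereas you pass directly through linearity of the trace; both are valid.
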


\begin{proof} We have that $x^*u^*ux\le x^*x$ and $tr(x^*u^*ux)=\|ux\|_2^2=\|x\|_2^2=tr(x^*x)$. Hence,
$$
\|x^*x-x^*u^*ux\|_1=tr(x^*x-x^*u^*ux)=0,
$$
so that  $x^*x=x^*u^*ux$.
Thus $\|(1-u^*u)^\frac{1}{2}x\|_2^2=\|x^*(1-u^*u)x\|_1=0$, therefore $(1-u^*u)x=(1-u^*u)^\frac{1}{2}[(1-u^*u)^\frac{1}{2}x]=0$, and $x=u^*ux$.
\end{proof}

In the finite case, $h\in H^2(\A)$ is a right outer operator if and only if  there is a cyclic separating vector for the right action $\D$ on the wandering subspace of $[h\A]_2$ and  $\|\E(h)\|_2=\|P(h)\|_2$, where $P$ is the orthogonal projection from $[h\A]_2$ to $[h\A]_2\ominus[h\A_0]_2$ (see \cite[Proposition 4.8]{BL1} or \cite[The remark after Theorem 4.4]{BL2}).
This result was extend to the case $1\le p<\8$ (see \cite[Theorem 4.4]{BL2}).

The following result  extends \cite[Theorem 4.4]{BL2} to the Haagerup
noncommutative  $H^{p}$-space case.

\begin{theorem}\label{thm:bilaterally-outer} Let $1\le p,r<\8$, and let    $h\in H^p(\A)$.
 \begin{enumerate}
 \item If $\frac{1}{p}+\frac{1}{r}=\frac{1}{2}$, then $h$ is an outer operator if and only if $\mathcal{E}(h)$ is  an outer operator in $L^p(\D)$ and $\|\E(hD^{\frac{1}{r}})\|_2=\|P(hD^{\frac{1}{r}})\|=\|P'(hD^{\frac{1}{r}})\|$, where $P$ is the orthogonal projection from $[hD^{\frac{1}{r}}\A]_2$ to $[hD^{\frac{1}{r}}\A]_2\ominus[hD^{\frac{1}{r}}\A_0]_2$ and
   $P'$ is the orthogonal projection from $[\A hD^{\frac{1}{r}}]_2$ to $[\A hD^{\frac{1}{r}}]_2\ominus[\A_0hD^{\frac{1}{r}}]_2$.
 \item Suppose that $\frac{1}{p}-\frac{1}{r}=\frac{1}{2}$ and $h D^{-\frac{1}{r}}\in L^2(\M)$. If $\mathcal{E}(h)$ is  an outer operator in $L^p(\D)$ and $\|\E(hD^{-\frac{1}{r}})\|_2=\|P(hD^{-\frac{1}{r}})\|=\|P'(hD^{-\frac{1}{r}})\|$, where $P$ is the orthogonal projection from $[hD^{-\frac{1}{r}}\A]_2$ to $[hD^{-\frac{1}{r}}\A]_2\ominus[hD^{-\frac{1}{r}}\A_0]_2$ and
   $P'$ is the orthogonal projection from $[\A hD^{-\frac{1}{r}}]_2$ to $[\A hD^{-\frac{1}{r}}]_2\ominus[\A_0hD^{-\frac{1}{r}}]_2$, then $h$ is an outer operator.
 \end{enumerate}
 \end{theorem}

\begin{proof} (i) $``\Rightarrow"$.  Using Proposition \ref{prop:left-right}, we obtain that $\mathcal{E}(h)$ is an outer operator in $L^p(\mathcal{D})$.
Since  $\E$ is a contractive projection from
 $H^2(\A)$ onto $L^2(\mathcal{D})$ with kernel $H^2_0(\A)$, we deduce that
 $$
 \|\E(hD^{\frac{1}{r}})\|_2=\inf_{h_0\in H^2_0(\A)}\|hD^{\frac{1}{r}}+h_0\|_2.
$$
 On the other hand, by Lemma \ref{outer property}, $hD^{\frac{1}{r}}$ is outer operator in $H^{2}(\A)$. Using Proposition \ref{prop:left-right}, we obtain that
 $$
 \begin{array}{rl}
 \|\E(hD^{\frac{1}{r}})\|_2  &=\inf_{h_0\in H^2_0(\A)}\|hD^{\frac{1}{r}}+h_0\|_2\\
 &  =\inf_{a_0\in \A_0}\|hD^{\frac{1}{r}}+hD^{\frac{1}{r}}a_0\|_2\\
 &=\|P(hD^{\frac{1}{r}})\|.
\end{array}
$$
Similarly, we can prove $ \|\E(hD^{\frac{1}{r}})\|_2=\|P'(hD^{\frac{1}{r}})\|$.

  $``\Leftarrow"$.  By Proposition \ref{condition outer}, $ h= ug$, where $g\in H^p(\A)$ is a left outer operator and $u \in \A$ is an isometry. On the other hand, it is clear that $gD^{\frac{1}{r}}$ is a left outer operator in $H^2(\A)$, Hence,
 $$
 \begin{array}{rl}
 \|\E(hD^{\frac{1}{r}})\|_2&=\|\E(u)\E(gD^{\frac{1}{r}})\|_2\le\|\E(gD^{\frac{1}{r}})\|_2\\
 &=\inf_{a_0\in A_0}\|gD^{\frac{1}{r}}+gD^{\frac{1}{r}}a_0\|_2\\
 &=\inf_{a_0\in A_0}\|u^*(hD^{\frac{1}{r}}+hD^{\frac{1}{r}}a_0)\|_2\\
 &\le\inf_{a_0\in A_0}\|hD^{\frac{1}{r}}+hD^{\frac{1}{r}}a_0\|_2\\
    & =\|P(hD^{\frac{1}{r}})\|=\|\E(hD^{\frac{1}{r}})\|_2.
\end{array}
$$
 This gives $\|\E(u)\E(gD^{\frac{1}{r}})\|_2=\|\E(gD^{\frac{1}{r}})\|_2$. Using Proposition \ref{prop:left-right}, we get $\E(gD^{\frac{1}{r}})$ is a left outer operator in in $L^2(\D)$, and so the left support of $\E(gD^{\frac{1}{r}})$ is 1. Applying Lemma \ref{lem:equality}, we obtain that $\E(u)$ is an isometry.
  On the other hand, we have that $\D\E(hD^{\frac{1}{r}})=\D\E(u)\E(gD^{\frac{1}{r}})\subset\D\E(gD^{\frac{1}{r}})$. Hence,
 $$
  L^2(\D)=[\D\E(hD^{\frac{1}{r}})]_2=[\D\E(u)\E(gD^{\frac{1}{r}})]_2\subset[\D\E(gD^{\frac{1}{r}})]_2\subset L^2(\D),
$$
i.e., $\E(gD^{\frac{1}{r}})$ is a right outer operator.  So, $\E(gD^{\frac{1}{r}})$ is an outer operator.
 From $\E(hD^{\frac{1}{r}})=\E(u)\E(gD^{\frac{1}{r}})$ follows that
 $$
 \E(u)\E(u^*)\E(hD^{\frac{1}{r}})=\E(u)\E(gD^{\frac{1}{r}})=\E(hD^{\frac{1}{r}}).
 $$
 Hence, $\E(u)\E(u^*)=1$, and so $\E(u)$ is a unitary. Therefore, $\E((u-\E(u))^*(u-\E(u)))=0$. So $u=\E(u)\in\D$ and $h$ is a left outer operator.

 Using the
alternative claim of Proposition \ref{condition outer} and the above method, we deduce that $h$ is a right outer operator.

(ii) From the proof of Proposition \ref{condition outer}, we know that $\E(hD^{-\frac{1}{r}})$ is an outer operator. Using same method as in the proof of (i), we obtain that $hD^{-\frac{1}{r}}$ is an outer operator in $H^2(\A)$. Hence, $h$ is an outer operator in $H^p(\A)$.
 \end{proof}

Let $d$ be a positive outer operator in $L^1(\D)$ with $\|d\|_1=1$. By Theorem \ref{outer property-Lp(D)}, $d$ is an invertible positive selfadjoint operator. Set
$$
\phi(x)=tr(xd),\qquad \forall x\in\M.
$$
It is clear that $\phi$ is a normal  faithful state on $\M$. Since $tr(\E(x))=tr(x)$ for $x\in L^1(\M)$ (see \cite[(2.4)]{JX}),  we get that
$$
\phi(\E(x))=tr(\E(x)d)=tr(\E(xd))=tr(xd)=\phi(x),\qquad \forall x\in\M.
$$
We denote the dual
weight   of $\phi$  by $\hat{\phi}$.   Then  $d$ is  the Radon-Nikodym derivative of  $\hat{\phi}$ with respect to
$\tau$ and
$$
\hat{\phi}(x)=\tau(xd),\qquad x\in\mathcal{N}_{+}.
$$
Hence, the role of $d$ is similar to that of $D$. It follows that if we replace $D$ by $d$ in Section 3 and 4, then the related results still hold.
\subsection*{Acknowledgment}
We thank the  referees for very useful comments.

\end{document}